\newtheorem{theo}{Theorem}
\newtheorem{definition}{Definition}
\newtheorem{lemma}{Lemma}
\theoremstyle{definition}
\newtheorem{case}{Case}
\title[Infinite genus surfaces and  irrational polygonal billiards]
{Infinite genus surfaces and irrational polygonal billiards}
\author[Ferr\'an Valdez]{Ferr\'an Valdez}
\address{Max Planck Institut f\"{u}r Mathematik
Vivatsgasse 7.
53111, Bonn, Germany.}
\email{ferran@mpim-bonn.mpg.de}
\begin{document}
		
%ABSTRACT

\begin{abstract}
We prove that the natural invariant surface associated with the billiard game on an irrational polygonal table is homeomorphic to the Loch Ness monster, that is, the only orientable infinite genus topological real surface with exactly one end.
\end{abstract}
\maketitle

 \textbf{Keywords}: Flat surface, infinite genus, polygonal billiards.\\
%INTRODUCTION

\section{introduction}
The classical Katok-Zemljakov construction associates to every rational angled polygon $\rm P$ a compact flat surface $\rm  X(P)$ with finite angle conic singularities \cite{KZ}, \cite{Tr}. The genus of this surface is totally determined by the angles of the polygon:
%%%%%%%%%%%%%%%%%%%%%%%%%%%%%%%%%%%%%%%%%%%%%%%%%%%%%%%%%%%%%%%%%%%%%%%%
%LEMMA  TOPOLOGY OF FLAT SURFACE GENERATED BY A RATIONAL ANGLED POLYGON%
%%%%%%%%%%%%%%%%%%%%%%%%%%%%%%%%%%%%%%%%%%%%%%%%%%%%%%%%%%%%%%%%%%%%%%%%
\begin{lemma}\cite{G}
\label{l0}
Let the interior angles of $\rm  P$ be $\rm \pi m_i/n_i$, $i=1,\ldots,k$, where $\rm m_i$ and $\rm n_i$ are coprime, and let $\rm N$ be the least common multiple of the $\rm n_i$'s. Then
$$
\rm
\text{genus}\hspace{1mm}  X(P)=1+\frac{N}{2}\left(k-2-\Sigma\frac{1}{n_i}\right)
$$
\end{lemma}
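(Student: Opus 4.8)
The plan is to compute the Euler characteristic of $X(P)$ from the explicit cell decomposition provided by the Katok--Zemljakov construction, and then read off the genus from $\chi = 2-2g$. Recall that $X(P)$ is assembled from $2N$ isometric copies of $P$, indexed by the dihedral group $D_N$ of order $2N$ generated by the linear reflections in the directions of the sides of $P$; it is precisely here that the hypothesis $N = \mathrm{lcm}(n_i)$ enters, guaranteeing that this group is finite of order $2N$. Two copies are glued along an edge according to the reflection relating their placements, the resulting surface is closed, and since the side-reflections generate all of $D_N$ it is connected. The easy counts come first: the faces are the $2N$ copies, so $F = 2N$; each copy contributes $k$ sides and every edge of $X(P)$ lies on exactly two copies, so $E = \tfrac12(2Nk) = Nk$.

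The crux is counting vertices, i.e.\ determining how many copies of $P$ cluster around each cone point. Fix the $i$-th vertex of $P$, with interior angle $\theta_i = \pi m_i/n_i$, and let $s,t \in D_N$ be the reflections in the two sides meeting there. I would check that $st$ is the rotation by $2\theta_i$ and that, because $\gcd(m_i,n_i)=1$, it has order exactly $n_i$, so that $\langle s,t\rangle$ is dihedral of order $2n_i$. Walking around a cone point lying above the $i$-th vertex, one passes successively through the copies indexed by the alternating word sequence $e,\, s,\, st,\, sts,\, \dots$, which runs through all of $\langle s,t\rangle$ before first returning to $e$; hence exactly $2n_i$ copies meet at such a cone point, the cone angle there being $2n_i\theta_i = 2\pi m_i$. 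Since the $2N$ copies furnish $2N$ corners over the $i$-th vertex altogether, there are $N/n_i$ cone points of this type, and summing over $i$ gives $V = N\sum_{i=1}^{k} 1/n_i$.

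Finally, $\chi(X(P)) = V - E + F = N\big(\sum_{i=1}^{k} 1/n_i - k + 2\big)$, and solving $2 - 2g = \chi(X(P))$ yields $g = 1 + \tfrac{N}{2}\big(k - 2 - \sum_{i=1}^{k} 1/n_i\big)$, as claimed. The step demanding the most care is the vertex count: one must verify that the alternating enumeration of copies around a cone point genuinely exhausts the full dihedral subgroup $\langle s,t\rangle$ and closes up only after $2n_i$ steps --- this is exactly where coprimality of $m_i$ and $n_i$ is used, and it is also the point at which one should keep track of orientations, since consecutive copies around a cone point carry opposite orientations.
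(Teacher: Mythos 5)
Your computation is correct and complete: the counts $F=2N$, $E=Nk$, $V=N\sum 1/n_i$ and the resulting $\chi=N\left(\sum 1/n_i-k+2\right)$ give exactly the stated genus, and you correctly isolate the only delicate point (that the $2n_i$ copies around a cone point over the $i$-th vertex are pairwise distinct because the alternating words in the two side-reflections exhaust a dihedral group of order $2n_i$, using $\gcd(m_i,n_i)=1$). The paper itself gives no proof, citing Gutkin, and your Euler-characteristic argument via the unfolding into $2N$ copies of $\rm P$ is precisely the standard proof of that cited result.
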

The Katok-Zemljakov construction, in the case of a general nondegenerate simply connected polygon $\rm P$, leads to a non-compact flat surface $\rm  X(P)$. Our main result is to determine its topological type: %MAIN THEOREM
\begin{theo}
	\label{teo}
Let $\rm\lambda_1\pi,\ldots,\lambda_N\pi$ be the interior angles of the polygon $\rm P$. Suppose that there exists $\rm j=1,\ldots,N$ such that  $\rm \lambda_j$ is not a rational number. Then, the flat surface $\rm  X(P)$ is homeomorphic to the Loch Ness monster, that is, the only infinite genus, orientable topological surface with exactly one end.
\end{theo}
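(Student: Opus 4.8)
\noindent\emph{Proof strategy.}
By the classification of non-compact surfaces it suffices to check that $\mathrm{X}(\mathrm{P})$ is orientable, has infinite genus, and has exactly one end. I will work with the usual model of the Katok--Zemljakov surface: let $D\leq O(2)$ be the group generated by the linear parts $\epsilon_1,\dots,\epsilon_N$ of the reflections in the sides of $\mathrm{P}$; then $\mathrm{X}(\mathrm{P})$ is obtained from one disjoint copy $\mathrm{P}_g$ of $\mathrm{P}$ for each $g\in D$ by gluing the $i$-th side of $\mathrm{P}_g$ to the $i$-th side of $\mathrm{P}_{g\epsilon_i}$, deleting the (infinite-angle) cone points lying over the irrational-angle vertices. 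Since $\epsilon_i\epsilon_{i+1}$ is the rotation by $2\lambda_i\pi$ and $\sum_i\lambda_i=N-2\in\mathbb{Z}$, the hypothesis that some $\lambda_j$ is irrational forces at least two of the $\lambda_i$ to be irrational, so $D^+=D\cap SO(2)$ is infinite; being finitely generated inside $O(2)$, $D$ is virtually $\mathbb{Z}^r$ for some $r\geq 1$, hence has polynomial growth and exactly one end if $r\geq 2$, two ends if $r=1$. Orientability is immediate: endow $\mathrm{P}_g$ with the orientation $\det(g)$ times a fixed one of $\mathrm{P}$; then adjacent copies $\mathrm{P}_g,\mathrm{P}_{g\epsilon_i}$ get opposite signs, and since the gluings are restrictions of reflections these orientations agree across every edge.

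For the genus I use the following exhaustion. Let $\mathrm{B}_n\subseteq D$ be the ball of radius $n$ for the word metric in the $\epsilon_i$, and let $\mathrm{S}_n=\bigcup_{g\in \mathrm{B}_n}\mathrm{P}_g$ with small open neighbourhoods of the irrational vertices (shrinking as $n\to\infty$) removed, so that $\mathrm{S}_n$ is a compact subsurface of $\mathrm{X}(\mathrm{P})$ and $\bigcup_n\mathrm{S}_n=\mathrm{X}(\mathrm{P})$. Computing $\chi(\mathrm{S}_n)=V-E+F$ as in the proof of Lemma~\ref{l0}, but on this finite piece: $F=\#\mathrm{B}_n$; each edge is shared by two copies except the $O(\#\partial \mathrm{B}_n)$ outer ones; the $2n_i$ copies around each complete cone point over a rational-angle vertex of angle $\pi m_i/n_i$ are identified to one point, while every incomplete fan over a rational or an irrational vertex contributes only $O(\#\partial \mathrm{B}_n)$ extra vertices in total, and the number $b_n$ of boundary circles of $\mathrm{S}_n$ is likewise $O(\#\partial \mathrm{B}_n)$. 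Collecting the leading terms,
$$
\chi(\mathrm{S}_n)=-\tfrac12\Bigl(N-2-\!\!\sum_{\lambda_i\in\mathbb{Q}}\tfrac{1}{n_i}\Bigr)\#\mathrm{B}_n+O(\#\partial \mathrm{B}_n).
$$
Since $D$ has polynomial growth, $\#\partial \mathrm{B}_n=o(\#\mathrm{B}_n)$; since $n_i\geq 2$ and at most $N-2$ of the angles are rational, the constant in parentheses is at least $(N-2)/2>0$. Hence $2\,\mathrm{genus}(\mathrm{S}_n)=2-b_n-\chi(\mathrm{S}_n)\to\infty$, so $\mathrm{X}(\mathrm{P})$ has infinite genus.

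For the number of ends I analyse $\mathrm{X}(\mathrm{P})\setminus \mathrm{S}_n$ for $n$ large. Its components are those of $\mathrm{Cayley}(D)\setminus \mathrm{B}_n$ (the copies $\mathrm{P}_g$ with $g\notin \mathrm{B}_n$ remain glued precisely along the edges between two of them), except that the connected collar around a deleted irrational vertex $v$ glues together all copies in the corresponding fan---a bi-infinite chain---and therefore merges any two components of $\mathrm{Cayley}(D)\setminus \mathrm{B}_n$ that this fan meets. If $r\geq 2$, $\mathrm{Cayley}(D)\setminus \mathrm{B}_n$ already has a single unbounded component for $n$ large. If $r=1$ it has two; but writing $a,b$ for the sides at an irrational-angle vertex $v$, the rotation $\epsilon_a\epsilon_b$ has infinite order, so $\langle\epsilon_a\epsilon_b\rangle$ has finite index in $D$ and reaches both ends of $D$, whence the fan over $v$ meets both unbounded components and merges them. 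In either case $\mathrm{X}(\mathrm{P})\setminus \mathrm{S}_n$ has exactly one component with non-compact closure, so $\mathrm{X}(\mathrm{P})$ has exactly one end. Being orientable, of infinite genus and with one end, $\mathrm{X}(\mathrm{P})$ is the Loch Ness monster.

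The delicate point is the Euler-characteristic bookkeeping of the second step: one must check carefully that the contributions of the incomplete fans, of $\partial \mathrm{B}_n$, and of the truncations along the removed neighbourhoods are all of order $o(\#\mathrm{B}_n)$, so that the negative leading term indeed dominates. Orientability and the one-end argument are comparatively soft, the latter resting only on the elementary facts that a virtually abelian subgroup of $O(2)$ has at most two ends and that a two-ended group is reached at both ends by each of its finite-index subgroups.
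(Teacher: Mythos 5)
Your argument is correct in outline, and it reaches the theorem by a genuinely different route from the paper in one of its two halves. For the number of ends the difference is mostly cosmetic: the paper also reduces everything to the group generated by the rotation parts of the holonomy (its $\mathrm{G}(\lambda')=\mathbf{Z}^{N-1}/\mathrm{Res}(\lambda')$ is exactly your $D^{+}$), retracts $\mathrm{X}(\mathrm{P})$ onto a graph built from the Cayley graph of that group, splits into the rank $\geq 2$ and rank $1$ cases just as you do, and in rank $1$ escapes to both graph ends by winding around an irrational vertex --- which is precisely your observation that the bi-infinite fan over such a vertex is a coset of a finite-index cyclic subgroup and therefore merges the two unbounded components. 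Where you genuinely diverge is the infinite-genus half: the paper exhibits infinitely many pairwise disjoint embedded punctured tori by lifting explicit words such as $[\mathrm{B}_{i_1}^{-1},\mathrm{B}_{i_2}][\mathrm{B}_{i_1},\mathrm{B}_{i_2}]$ and running a four-way case analysis over the possible resonances among $\lambda_{i_1},\lambda_{i_2}$, whereas you compute $\chi(\mathrm{S}_n)$ asymptotically on an exhaustion and let polynomial growth kill the boundary contributions. Your route is more uniform (no resonance cases) and re-derives Gutkin's formula of Lemma~\ref{l0} as the ``complete ball'' specialization, but it concentrates all the work in the bookkeeping you flag at the end: one must actually bound the number of incomplete fans, of boundary circles of $\mathrm{S}_n$, and of truncation arcs at the deleted vertices by $O(\#\partial \mathrm{B}_n)$, and, since $D$ is only virtually abelian, either use that $\#\mathrm{B}_n$ is eventually polynomial in $n$ or pass to a subsequence to guarantee $\#\partial \mathrm{B}_n=o(\#\mathrm{B}_n)$; one also needs the standing nondegeneracy assumption to exclude angles equal to $\pi$ (i.e.\ $n_i=1$), which is what makes the coefficient $N-2-\sum_{\lambda_i\in\mathbf{Q}}1/n_i$ positive. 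All of these estimates are true and routine, so I regard your proof as complete modulo that verification; the paper's explicit handles avoid asymptotics entirely at the price of the resonance subcases.
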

\indent Theorem 1 extends some results presented in \cite{V} that deal with the case where $\rm P$ is a "generic" triangle. Moreover, the proof we present makes no use of the  dictionary between flat surfaces arising from general polygons and the leaves of homogeneous holomorphic foliations on $\mathbf{C}^2$.\\
\indent This article is organized as follows. In Section \ref{gen}  we recall some basic concepts and results related to non-compact orientable surfaces and flat surfaces arising from polygons. In section \ref{original} we prove Theorem \ref{teo}. \\

%ACKNOWLEDGMENTS
\indent \textbf{Acknowledgments}. The author gratefully acknowledges support from Sonderforschungsbereich/Transregio 45 and ANR Symplexe and thanks the Max-Planck Institut f\"{ur} Mathematik in Bonn for the excellent working conditions provided during the preparation of this paper. The author is indebted to M. M\"{o}ller, B. Weiss for useful comments and discussions, and would like to express his gratitude to the referee for many valuable remarks.
%%%%%%%%%%%%%%%%%%%%%%%%%%%%%%%%%%%%%%%%
%SECTION GENERALITIES%%%%%%%%%%%%%%%%%%%
%%%%%%%%%%%%%%%%%%%%%%%%%%%%%%%%%%%%%%%%
\section{generalities}\label{gen} From now on, \emph{surface} will mean a connected 2-dimensional real \emph{orientable} manifold.
\subsection{Non-compact orientable surfaces: the Loch Ness monster} A \emph{subsurface} of a given  surface is a closed region inside the surface whose boundary consists of a finite number of non-intersecting simple closed curves. The genus $\rm g$ of a \emph{compact} bordered surface $\rm S$  with $\rm q$ boundary curves and Euler characteristic $\chi$ is the number $\rm g=1-\frac{1}{2}(\chi+q)$. A surface is said to be \emph{planar} if all of its compact subsurfaces are of genus zero.
%DEFINITION INFINITE GENUS
\begin{definition}
A surface $\rm X$ is said to have \emph{infinite genus} if there exists no finite set $\rm \mathcal{C}$ of mutually non-intersecting simple closed curves with the property that $\rm X\setminus \mathcal{C}$ is  \emph{connected and planar}.
\end{definition}
\indent Two compact real surfaces are homeomorphic if and only if they have the same genus. Ker\'ekj\'art\'o's theorem states that non-compact surfaces of the same genus and orientability class are homeomorphic if and only their \emph{ideal boundaries} are homeomorphic. We refer the reader to \cite{R} for the definition of ideal boundary and a proof of this theorem. Points in the ideal boundary of a surface $\rm X$ are also called \emph{ends} of $\rm X$.
%DEFINITION LOCH NESS MONSTER
\begin{definition}
Up to homeomorphism, the \emph{Loch Ness monster} is the unique infinite genus surface with only one end.
\end{definition}
\begin{center}
\includegraphics[scale=0.2]{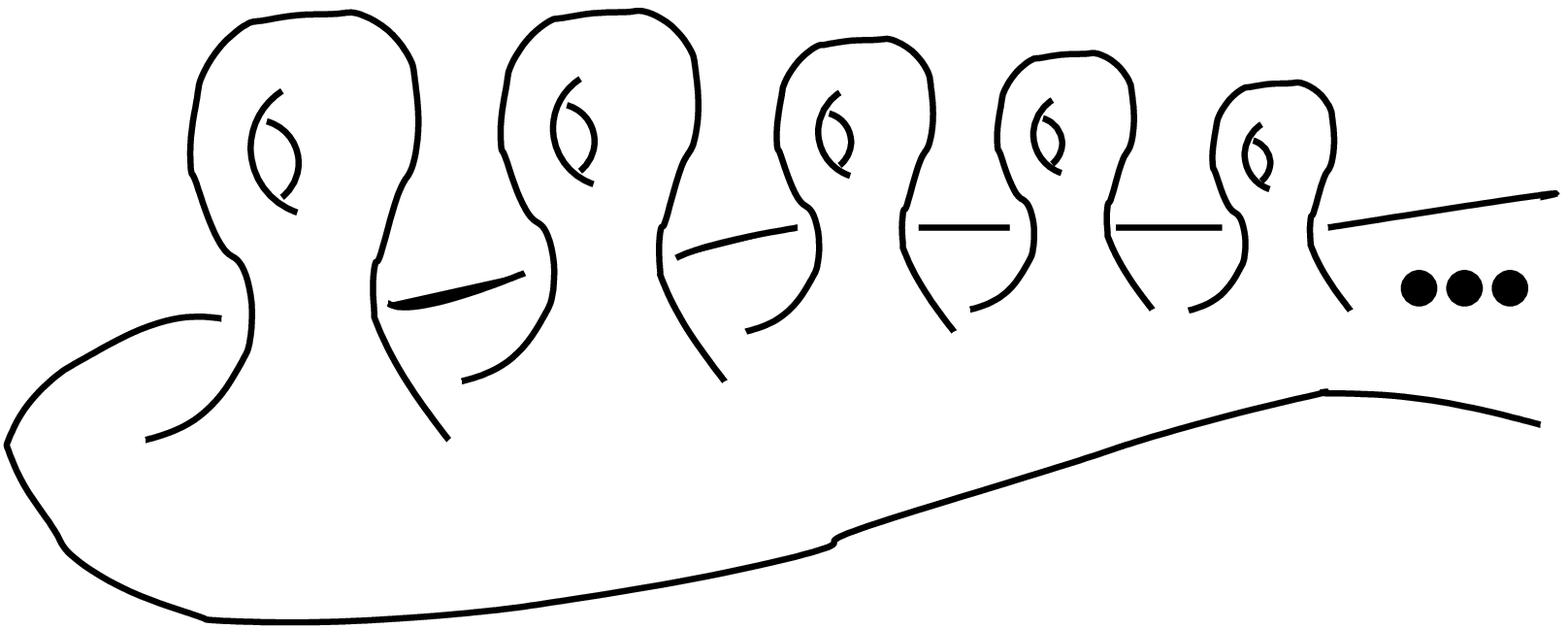}\\
Figure 1: the Loch Ness monster.\\
\end{center}
\indent This nomenclature can be found in \cite{Gh}. An infinite genus surface $\rm X$ has only one \emph{end} if and only if, for every compact set $\rm K\subset X$, there exists a compact set $\rm K\subset K'$ such that $\rm X\setminus K'$ is connected \cite{R}.\\
\\
%FLATCOVERINGSOFTHE N-PUNCTURED SPHERE.
\subsection{Flat coverings of the N-punctured sphere}\label{flatco} Henceforth $\rm P\subset\mathbf{R}^2$ denotes a non-degenerate \emph{simply connected} N-sided polygon. Vertices of $\rm P$ at which the interior angle is a rational multiple of $\rm \pi$ are called \emph{rational}. All vertices of a rational polygon are rational.\\
\indent Let $\rm P_0:=P\setminus\{\text{vertices of } P\}$. The identification of two disjoint copies of the vertexless polygon $\rm P_0$ along "common sides" defines a Euclidean structure on the $\rm N$-punctured sphere. We denote it by $\rm\mathbf{S}^2(P)$. The following figure illustrates the simplest case:
\begin{center}
  \label{nends}
\includegraphics[scale=0.25]{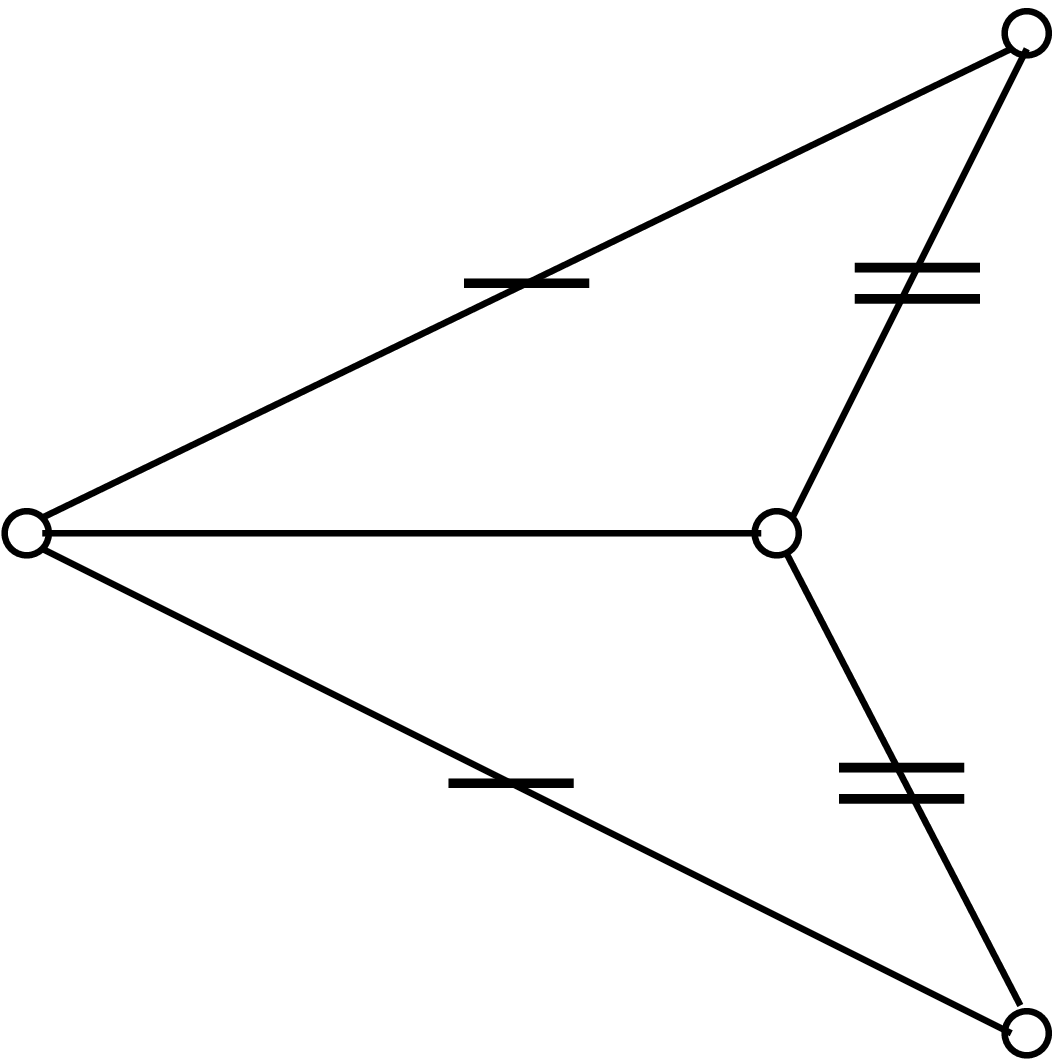}\\
Figure 2: $\rm \mathbf{S}^2(P)$ for a triangle $\rm P$.\\
\end{center}
\indent The locally Euclidean structure on $\rm\mathbf{S}^2(P)$ gives rise to the holonomy representation:
\begin{equation}
  \label{hol}
\rm hol:\pi_1(\mathbf{S}^2(P))\longrightarrow Isom_+(\mathbf{R}^2)
\end{equation}
\indent  Let $\rm B_j $ be a simple loop in $\rm \mathbf{S}^2(P)$ around the missing vertex of $\rm P$ whose interior angle is $\rm\lambda_j\pi$, $\rm j=1,\ldots, N$. Suppose that $\rm B_j\cap B_i=*$, for $\rm i\neq j$. Then, $\rm \{B_j\}_{j=1}^N$ generates $\rm\pi_1(\mathbf{S}^2(P),*)$. Given an isometry $\rm\varphi\in Isom_+(\mathbf{R}^2)$, we denote its derivative by $\rm D\circ\varphi$. A simple calculation shows that $ \rm M_j:= D\circ hol(B_j)$ is given by:
  \begin{equation}
  \label{rots}
  \rm
  M_j=
\begin{pmatrix}
  \cos(2\lambda_j\pi) & -\sin(2\lambda_j\pi) \\
  \sin(2\lambda_j\pi) & \cos(2\lambda_j\pi)
\end{pmatrix}
\rm
\hspace{1cm} j=1,\ldots,N.
 \end{equation}
\indent Let $ \rm\widetilde{\mathbf{S}^2(P)}$ be the universal cover of the N-punctured sphere $\rm \mathbf{S}^2(P)$ and $\rm Trans(P)$ the kernel of $\rm D\circ hol$.
%DEFINITION MINIMAL TRANSLATION SURFACE.
\begin{definition} \cite{Ho}
Let $\rm X(P_0):=\widetilde{\mathbf{S}^2(P)}/Trans(P)$. The translation surface $\rm  X(P_0)$ is called the \emph{minimal translation surface corresponding to $\rm P$}.
\end{definition}
\indent  We denote by $\rm \hat{\pi}:X(P_0)\longrightarrow \mathbf{S}^2 (P)$ the corresponding projection. The deck transformation group of the covering, $\rm Aut(\hat{\pi})$, is abelian, as $\rm Trans(P)$ always contains the commutator subgroup\linebreak $\rm [\pi_1(\mathbf{S}^2(P)),\pi_1(\mathbf{S}^2(P))]$ (see [\emph{ibid}], \cite{Ho1}).\\
\indent When the set of rational vertices of $\rm P$ is not empty, the translation surface $\rm X(P_0)$ can be locally compactified by adding to it points above rational vertices of $\rm P$. The result of this local compactification is a flat surface with conical singularities. We denote it by $\rm X(P)$. If the set of rational vertices of $\rm P$ is empty, we set $\rm X(P)=X(P_0)$. 
\begin{definition}
We call $\rm X(P)$ the flat surface obtained from the polygon $\rm P$ by the Katok-Zemljakov  construction.
\end{definition}
 \emph{Remark}. In the case of rational polygons, some authors give a different definition for the flat surface $\rm X(P)$, see \cite{MT} or \cite{Tr}.
 
 %%%%%%%%%%%%%%%%%%%%%
%PROOF MAIN RESULT %%%%%%%%
%%%%%%%%%%%%%%%%%%%%%
\section{Proof of theorem 1} 
	\label{original}
\indent The proof of Theorem \ref{teo} is organized as follows.  First we show that the surface $\rm X(P)$ has only one end. For this we consider two main cases: (A) all interior angles of $\rm P$ are irrational multiples of $\pi$ and (B) there exists at least one interior angle of $\rm P$ which is a rational multiple of $\pi$ . Second, we prove that $\rm X(P)$ has infinite genus. Through the proof of Theorem 1, we need the following
%DEFINITION TOTALLY IRRATIONAL
\begin{definition}
	\label{resonance}
	Let $\rm (n_1,\ldots,n_{N-1})$ be a choice of coordinates for $\rm \mathbf{Z}^{N-1}$ and $\rm\lambda'=(\lambda_{i_1},\ldots,\lambda_{i_{N-1}})$ a choice of $\rm N-1$ angle parameters $\rm\lambda_{i_j}\in\{\lambda_1,\ldots,\lambda_{N}\}$. A point $\rm \hat{n}:=(n_1,\ldots,n_{N-1})$ is called a resonance of $\lambda'$ if and only if $\rm\sum_{j=1}^{N-1}n_j\lambda_{i_j}$ is an integer. The set of resonances of $\lambda'$ is a subgroup of $\rm\mathbf{Z}^{N-1}$. Let $\rm Res(\lambda')$ denote this subgroup and $\rm G(\lambda'):=\mathbf{Z}^{N-1}/Res(\lambda')$.  The parameter $\lambda$ is called \emph{totally irrational} if, for every choice $\lambda'$, the subgroup $\rm Res(\lambda')$ is trivial.
 \end{definition}
 %THE SURFACE HAS ONLY ONE END CASE A.
 \textbf{The surface $\rm X(P)$ has only one end.  Case (A)}. We suppose that all interior angles of the polygon $\rm P$ are irrational multiples of $\pi$, so that $\rm X(P_0)=X(P)$. Our strategy is to construct, for every compact set $\rm K\subset  X(P)$, a compact set $\rm K'=K'(K)$ containing $\rm K$ such that $\rm X(P)\setminus K'$ is connected.\\
\indent Let $\rm d$ be the standard Euclidean metric on $\mathbf{C}$ and 
\begin{equation}
	\label{rho}
	\rm
	\rho:\mathbf{S}^2(P)\longrightarrow P
\end{equation}
the natural projection of the punctured sphere onto the polygon $\rm P$. Denote the set of vertices of $\rm P$ by $\rm Ver(P)$.  For every compact set  $\rm K\subset X(P)$, the projection $\rm \hat{\pi}(K)$ onto the punctured sphere $\rm\mathbf{S}^2(P)$ is contained in:
\begin{equation}
	\label{kepsilon}
	\rm
	K_{\epsilon}:=\{t\in\mathbf{S}^2(P)\hspace{1mm}\mid\hspace{1mm} d(\rho(t),Ver(P))\geq\epsilon\},
\end{equation}
for a suitable choice of $0<\epsilon\ll1$. Let $\rm Sides(P)\subset P$ be the sides of the polygon and fix a side $\rm s\in Sides(P) $. We define
\begin{equation}
	\label{uepsilon}
	\rm
	U_{\epsilon}:= K_{\epsilon}\cap\rho^{-1}(P\setminus \{Sides(P)\setminus s\})
\end{equation}
\indent The closure of $\rm U_{\epsilon}$ in $\rm \mathbf{S}^2(P)$ is $\rm K_{\epsilon}$. For every $\rm\xi \in\hat{\pi}^{-1}(U_{\epsilon})$, we denote by $\rm\widetilde{U_{\epsilon}(\xi)}$ a connected lift of $\rm U_{\epsilon}$ to the surface $\rm X(P)$ containing the point $\xi$. By compactness, there exist finitely many points $\rm \{\xi_1,\ldots,\xi_m\}$ in $\rm X(P)$ such that the compact set $\rm K$ is contained in the closure in $\rm X(P)$ of
\begin{equation}
	\label{kprima}
	\rm
	\bigcup_{j=1 }^m\widetilde{U_{\epsilon}(\xi_j)}.
\end{equation}
\indent We define $\rm K'(K)$ to be the closure in $\rm X(P)$ of (\ref{kprima}). 
\begin{center}
\includegraphics[scale=0.3]{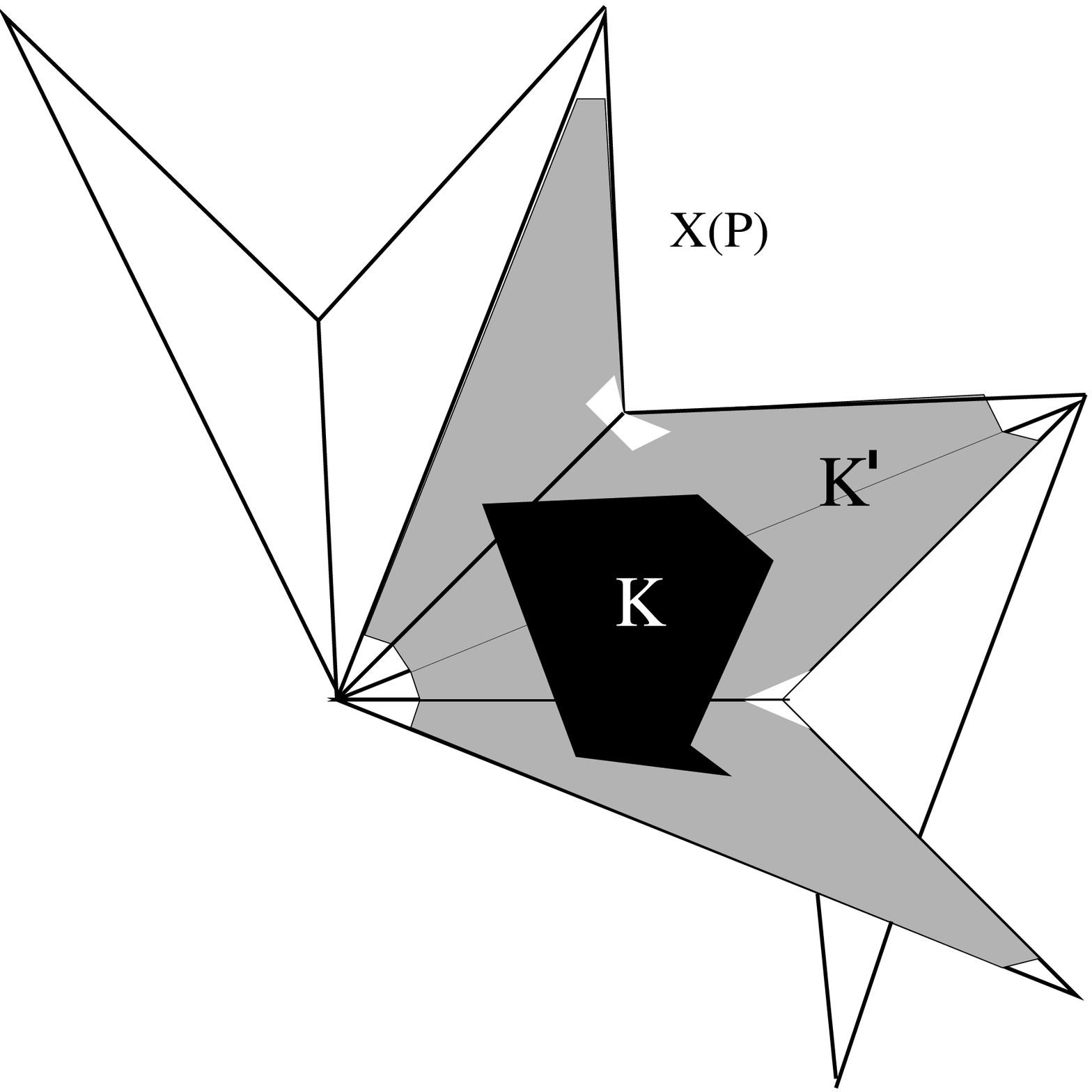}\\
Figure 3: The sets $\rm K$ and $\rm K'$ in $\rm X(P)$.
\end{center}
%LEMMA
\begin{lemma}
The complement of $\rm K'$ in $\rm X(P)$ is connected by arcs. 
\end{lemma}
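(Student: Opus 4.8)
The plan is to prove the lemma by showing that every point of $X(P)\setminus K'$ can be joined, by an arc lying in $X(P)\setminus K'$, to one of a distinguished family of ``spiral neighbourhoods'' attached to the punctures of $\mathbf{S}^2(P)$, and then that all of these spiral neighbourhoods lie in a single arc--component of $X(P)\setminus K'$.

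Two structural facts drive the argument. First, $U_\epsilon$ is simply connected: it is two copies of the vertexless polygon glued along the single side $s$ --- topologically a disc --- with neighbourhoods of the finitely many punctures deleted, and deleting those neighbourhoods does not destroy simple connectivity. Hence $\hat\pi$ maps each connected component of $\hat\pi^{-1}(U_\epsilon)$ homeomorphically onto $U_\epsilon$; write $\{\widetilde U_\beta\}$ for these components, so that $K'=\overline{\widetilde U_{\beta_1}\cup\dots\cup\widetilde U_{\beta_m}}$ for finitely many indices $\beta_1,\dots,\beta_m$. Second, for each $j$ let $V_j$ be the component of $\mathbf{S}^2(P)\setminus K_\epsilon$ around the $j$-th puncture, a punctured disc. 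Since $\lambda_j$ is irrational, $M_j=D\circ hol(B_j)$ has infinite order, so no nonzero power of the loop $B_j$ lies in $\mathrm{Trans}(P)=\ker(D\circ hol)$; therefore each connected component of $\hat\pi^{-1}(V_j)$ is a cover of $V_j$ in which the loop around the puncture does not close up, hence is simply connected (homeomorphic to an open disc). Call these the \emph{spiral neighbourhoods}. Because $K'\subset\hat\pi^{-1}(K_\epsilon)$ and $V_j\cap K_\epsilon=\varnothing$, every spiral neighbourhood is contained in $X(P)\setminus K'$.

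Next I would show that each point $p\in X(P)\setminus K'$ is joined inside $X(P)\setminus K'$ to a spiral neighbourhood. Since $\widetilde U_{\beta_1},\dots,\widetilde U_{\beta_m}\subset K'$, the point $p$ lies in a spiral neighbourhood, or in an allowed piece $\widetilde U_\beta$ with $\beta\notin\{\beta_1,\dots,\beta_m\}$, or on the frontier of such a piece (a lift of a side $s_k$ with $k\ne 1$, or a lift of the circle $d(\rho(\cdot),\mathrm{Ver}(P))=\epsilon$); in the latter cases one first slides into an adjacent allowed $\widetilde U_\beta$ or into an adjacent spiral neighbourhood, noting that a frontier arc contained in $X(P)\setminus K'$ always has an allowed piece or a spiral neighbourhood on one side. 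From an allowed $\widetilde U_\beta$, which is arcwise connected, one then walks to one of its ``corners'' --- a deleted puncture-neighbourhood --- which opens directly into a spiral neighbourhood; the whole arc stays in $X(P)\setminus K'$.

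The remaining --- and genuinely delicate --- step is to show that all spiral neighbourhoods lie in one arc--component of $X(P)\setminus K'$. Consider the incidence graph $\mathcal G$ whose vertices are the pieces $\widetilde U_\beta$ together with the spiral neighbourhoods, with an edge whenever two of them share a boundary arc; $\mathcal G$ is connected because $X(P)$ is, and connectivity of $\mathcal G$ yields arcwise connectivity of the union of the corresponding pieces. It then suffices to verify that deleting the finitely many vertices $\widetilde U_{\beta_1},\dots,\widetilde U_{\beta_m}$ leaves $\mathcal G$ connected. For this one uses two facts: every $\widetilde U_\beta$ is incident to a spiral neighbourhood around \emph{each} of the $N$ punctures (through its corners), and every spiral neighbourhood is incident to \emph{infinitely many} pieces $\widetilde U_\beta$, because the lift of the circle around the puncture to the spiral neighbourhood is an infinite arc that crosses lifts of the sides of $P$ infinitely often. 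Hence any path in $\mathcal G$ running through a forbidden $\widetilde U_{\beta_i}$ can be rerouted: dive from a neighbour of $\widetilde U_{\beta_i}$ into a spiral neighbourhood incident to it, travel along that spiral neighbourhood past all forbidden pieces, and re-emerge into an allowed piece; chaining this rerouting over the finitely many forbidden pieces, and passing through spiral neighbourhoods around different punctures when needed, produces the required arc. The main obstacle is making this rerouting rigorous --- in particular, showing that two distinct spiral neighbourhoods around the \emph{same} puncture are joined through allowed pieces, which is exactly where one needs the combination ``each piece meets spirals around all punctures'' and ``each spiral meets infinitely many pieces''.
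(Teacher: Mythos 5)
Your decomposition of $X(P)$ into the simply connected pieces $\widetilde U_\beta$ (the connected lifts of $U_\epsilon$) and the ``spiral'' neighbourhoods over the punctures is sound, and it is a genuinely different route from the paper's: the paper retracts $X(P)$ onto a graph $\Upsilon$ identified with the Cayley graph of $G(\lambda')=\mathbf{Z}^{N-1}/\mathrm{Res}(\lambda')$ and argues separately according to whether $\mathrm{Rank}\,G(\lambda')\geq 2$ (one end) or $=1$ (two ends, reconnected by spiralling lifts of loops around irrational vertices). Your first two steps are fine: $U_\epsilon$ is simply connected, each component of $\hat\pi^{-1}(V_j)$ is the universal cover of the punctured disc $V_j$ because no nonzero power of $B_j$ lies in $\mathrm{Trans}(P)$, and every point of $X(P)\setminus K'$ can be led into a spiral.

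The decisive step, however --- that the incidence graph $\mathcal G$ minus the finitely many forbidden pieces stays connected --- is exactly where you stop, and the two facts you invoke do not imply it. Abstract counterexample: pieces indexed by $\mathbf{Z}$, one spiral $W^+$ adjacent to $\{0,1,2,\dots\}$ and one spiral $W^-$ adjacent to $\{0,-1,-2,\dots\}$; every spiral meets infinitely many pieces and every piece meets a spiral, yet deleting the single piece $0$ disconnects the graph. The missing ingredient is the coset structure of the adjacency: the deck group $G$ (which is abelian and acts simply transitively on the pieces) has the property that the pieces adjacent to a fixed spiral over the $j$-th puncture form finitely many \emph{full} cosets of the infinite cyclic subgroup $\langle[B_j]\rangle\leq G$, and these subgroups generate $G$. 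Granting this, for a finite forbidden set $F$ and allowed pieces $g$, $g'$ with $g^{-1}g'=[B_{i_1}]^{a_1}\cdots[B_{i_{N-1}}]^{a_{N-1}}$ one takes the alternating piece--spiral path $g\to g[B_{i_1}]^{n}\to g[B_{i_1}]^{n}[B_{i_2}]^{a_2}\to\cdots\to g[B_{i_1}]^{a_1}\cdots[B_{i_{N-1}}]^{a_{N-1}}=g'$, where the free exponent $n$ (and its analogues when more corrections are needed) avoids the finitely many values that would land an intermediate piece in $F$. This coset bookkeeping is precisely what the paper's Cayley-graph computation supplies; once you add it, your argument closes, and it even treats the paper's rank-one case uniformly, since there the single spiral over a puncture is adjacent to every piece.
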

%PROOF
\begin{proof}
\indent Let $\rm\Gamma:=\{B_{i_j}\}_{j=1}^{N-1}$, $\rm i_j\in\{1,\ldots,N\}$, be a basis for $\rm \pi_1(\mathbf{S}^2(P),*)$ such that $\rm D\circ hol (B_{i_j})$ is a rotation of angle $\rm 2\pi\lambda_{i_j}$, for $\rm j=1,\ldots N-1$. For this choice of basis, let $\rm\lambda'=(\lambda_{i_1},\ldots,\lambda_{i_{N-1}})$.
The $\rm N$ punctured sphere $\rm\mathbf{S}^2(P)$ retracts to $\Gamma$.
Fix one retraction
\begin{equation}
	\label{R}
	 \rm R:\mathbf{S}^2(P)\twoheadrightarrow\Gamma. 
\end{equation}
It induces a retraction
\begin{equation}
	\label{Rtilde}
	 \rm \widetilde{R}: X(P)\twoheadrightarrow \Upsilon. 
\end{equation}
\indent In the following paragraph we describe the retraction space $\Upsilon$ in terms of $\rm C(\lambda')$, the Cayley graph of $\rm G(\lambda')$. First, we identify the set of vertices of $\rm C(\lambda')$, that by definition is the set of elements of the group $\rm G(\lambda')$, with the set of points $\rm \widetilde{R}(\hat{\pi}^{-1}(*))$, in the following way. Let $\rm\{e_j\}_{j=1}^{N-1}$ be the standard basis vectors for $\rm G(\lambda')$.  Recall that $\rm\widetilde{R}$ restricted to the fiber $\hat{\pi}^{-1}(*)$ is the identity. Choose a point $\rm *_0\in\hat{\pi}^{-1}(*)$ and define $\rm \mu: \hat{\pi}^{-1}(*)\longrightarrow G(\lambda')$ by 
\begin{equation}
	\label{mu}
	\rm
\mu(*_0)=(0,\ldots,0), \hspace{5mm}\mu(\rm B_{i_j}*_0)=e_j,\hspace{3mm}\text{for all}\hspace{1mm}j=1,\ldots,N-1.
\end{equation}
and, for every word $\rm m=m(B_{i_1},\ldots, B_{i_{N-1}})\in\pi_1(\mathbf{S}^2(P),*)$
\begin{equation}
	\label{mu1}
	\rm
\mu(m*_0)=m(\mu(B_{i_1}*_0),\ldots,\mu(B_{i_{N-1}}*_0))\mu(*_0).
\end{equation}
On the left side of the preceding equations, multiplication corresponds to the monodromy action of  $\rm \pi_1(\mathbf{S}^2(P),*)$ on the fiber $\hat{\pi}^{-1}(*)$, and on the right side, multiplication corresponds to the translation action of $\rm G(\lambda')$ on itself. Now we describe the identification for the edges of $\rm C(\lambda')$.  Let $\rm g$, $\rm e_jg\in G(\lambda')$ be vertices of $\rm C(\lambda')$. Denote $\rm\widetilde{B_{i_j}}(g)$ the lift of $\rm B_{i_j}$ via $\hat{\pi}$ whose extremities are the points $\rm \mu^{-1}(g)$ and $\rm \mu^{-1}(e_jg)$. We identify the edge between the vertices $\rm g$, $\rm e_jg$ with $\rm\widetilde{R}(\widetilde{B_{i_j}}(g))=\widetilde{B_{i_j}}(g)$. Henceforth $\Upsilon$ is endowed with its standard graph topology. This graph depends on our choice of basis $\Gamma$. Nevertheless, the number of ends of $\Upsilon$ does not, for $\rm\sum_{j=1}^{N}\lambda_j$ is an integer and this implies that $\rm Rank(G(\lambda'))=Rank(G(\lambda''))$ for any pair of choices $\lambda'$, $\lambda''$ of angle parameters. We are now in position to finish the proof of Lemma 2 by considering two cases.\\
\\
%CASE RANK GEQ THAN 2.
\underline{Case A.1: Rank $\rm G(\lambda')\geq 2$}. In this situation, any basis $\Gamma$ for \linebreak $\rm \pi_1(\mathbf{S}^2(P),*)$ defines a graph $\Upsilon$ with exactly one end. Therefore, if we set  $\rm \mathcal{K}:=\widetilde{R}(K')$, there is a finite subgraph  $\mathcal{K}'$ of $\Upsilon$ containing $\mathcal{K}$  such that $\rm\Upsilon\setminus\mathcal{K}'$ is connected. Let $\eta_1$ and $\eta_2$ be two points in $\rm X(P)\setminus K'$. We claim that $\eta_1$ and $\eta_2$ can be connected through an arc with points $\rm \eta'_1$ and $\eta'_2$ in $\rm\widetilde{R}^{-1}(\Upsilon\setminus\mathcal{K}')$, respectively. This implies that $\rm X(P)\setminus K'$ is connected by arcs, for $\rm\widetilde{R}^{-1}(\Upsilon\setminus\mathcal{K}')$ is connected by arcs. To prove our claim we suppose, without loss of generality, that $\rm d(\rho\circ\hat{\pi}(\eta_j),Ver(P))<\epsilon$, for $\rm j=1,2$. Let
\begin{equation}
	\label{moco}
	\rm
	\gamma_j:[0,L]\longrightarrow \rm\mathbf{S}^2(P)\setminus K_{\epsilon},\hspace{5mm}L\in\mathbf{N}, \hspace{1mm} j=1,2
\end{equation}
be a parameterization for a simple loop passing through $\rm\hat{\pi}(\eta_j)$. Up to a change of basis for $\rm \pi_1(\mathbf{S}^2(P),*)$, we can suppose that the image of each $\rm\gamma_j$ is in the homotopy class of  $\rm B_{i_j}^L$, for $\rm j=1,2$.  Let $\rm \widetilde{\gamma_j}$ be a lift of $\rm \gamma_j$ to $\rm X(P)\setminus K'$ such that $\rm \widetilde{\gamma_j}(0)=\eta_j$, $\rm j=1,2$. Given that the matrix $\rm D\circ hol(B_{i_j})$ is a rotation by an irrational multiple of $\pi$, the lift $\rm \widetilde{\gamma_j}$ is never closed and the parameter $\lambda'$ presents no resonances of the form $\rm ne_j$, for $\rm n\in\mathbf{N}$ and $\rm j=1,2$. Therefore, $\rm\widetilde{R}(\widetilde{\gamma_j})$ is a path on the graph $\Upsilon$ without cycles passing through at least $\rm L$ vertices. The compact set $\rm\mathcal{K}'\subset\Upsilon$ contains finitely many vertices. Then, for $\rm L$ large enough, the point $\rm\widetilde{R}\circ\widetilde{\gamma_j}(L)$ is contained in $\Upsilon\setminus\mathcal{K}'$, for both $\rm j=1,2$. For such $\rm L$, it is then sufficient to consider $\rm\eta_j'=\widetilde{\gamma_j}(L)$.\\
\\
%CASE RANK EQUAL TO 1
\underline{Case A.2: Rank $\rm G(\lambda')=1$}. As in the preceding case, we suppose that $\rm d(\rho\circ\hat{\pi}(\eta_j),Ver(P))<\epsilon$, for $\rm j=1,2$. Let
\begin{equation}
	\label{toco}
	\rm
	\delta_j:\mathbf{R}\longrightarrow\mathbf{S}^2(P)\setminus K_{\epsilon}
\end{equation}
be a $\mathbf{Z}$-covering of a simple loop passing through $\rm\hat{\pi}(\eta_j)$ and such that $\rm\delta_j(n)=\hat{\pi}(\eta_j)$, for all $\rm n\in\mathbf{Z}$, $\rm j=1,2$. As in the preceding case, we can suppose that, for each $\rm j=1,2$ the image of $\rm [0,L]$ via $\rm\delta_j$ is in the homotopy class of $\rm B_{i_j}^L$ and that $\rm D\circ hol(B_{i_j})$ is a rotation by an angle $\rm\lambda_{i_j}$ that is not a rational multiple of $\pi$. Let $\rm\widetilde{\delta_j}$ denote the lift of $\rm\delta_j$ to $\rm X(P)\setminus K'$ satisfying $\rm\widetilde{\delta_j}(0)=\eta_j$. We claim that image of the composition $\rm\widetilde{R}\circ\widetilde{\delta_j}$ 'makes progress' in the Cayley graph $\Upsilon$. More precisely, that the image of this composition is a regular subgraph $\rm\Upsilon'_j$  of $\Upsilon$  isomorphic to the Cayley graph of $\mathbf{Z}$. Indeed, assume $\rm\widetilde{R}(\eta_j)\in\widetilde{R}(\hat{\pi}^{-1}(*))$. Then, the vertices of $\rm\Upsilon'_j$ are $\rm \widetilde{R}\circ\widetilde{\delta_j}(n)$, for $\rm j=1,2$ and $\rm n\in\mathbf{Z}$. Given that $\rm \lambda_{i_1}$ and  $\rm \lambda_{i_2}$ are not rational multiples of $\pi$,  the parameter $\lambda'$ presents no resonances of the form $\rm ne_j$, for $\rm n\in\mathbf{N}$ and $\rm j=1,2$. This implies that all vertices in $\rm \{\rm\widetilde{R}\circ\widetilde{\delta_j}(n)\}_{n\in\mathbf{Z}}$ are different, for each $\rm j=1,2$ and the claim follows. In the next figure, we illustrate with normal and dotted arrows the 'progress' of the graphs $\rm\Upsilon'_j$  in $\Upsilon$ in the case of an isoceles triangle. Here $\rm Res(\lambda')$ is generated by $(1,-1)$ and integers in the figure show the identifications on the vertices required to obtain $\Upsilon$.
\begin{center}
\includegraphics[scale=0.3]{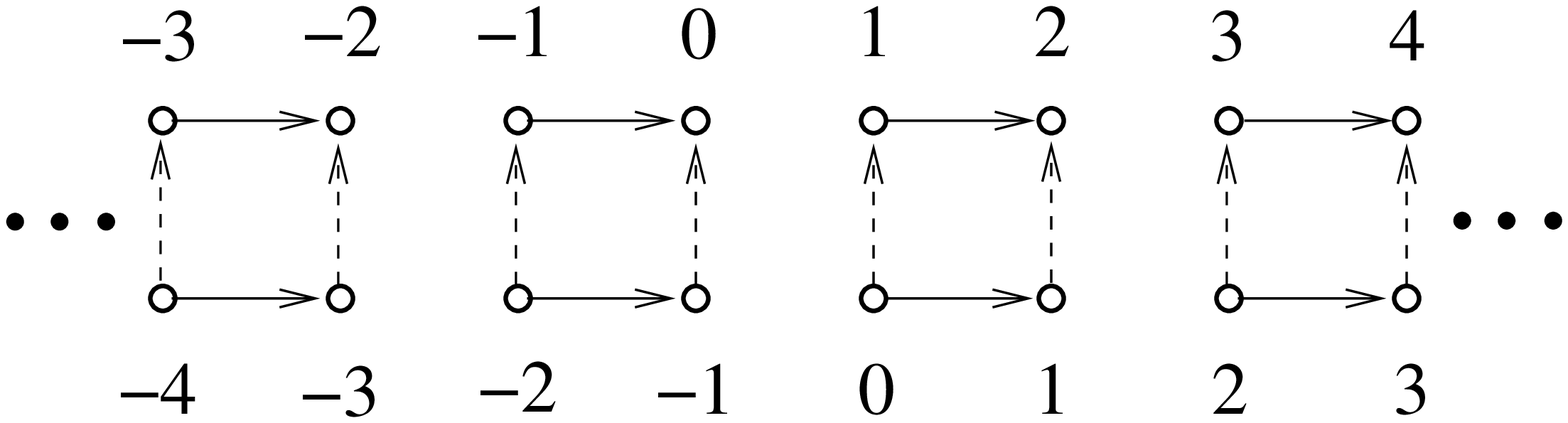}\\
Figure 4: graphs $\rm\Upsilon_1'$ and $\rm \Upsilon_2'$ (dotted arrows) in $\Upsilon$.
\end{center}
\indent When the rank of $\rm G(\lambda')$ is equal to one, there is a finite subgraph $\mathcal{K}'\subset\Upsilon$ containing $\mathcal{K}$ such that $\rm\Upsilon\setminus\mathcal{K}'$ has two connected components $\Xi_1$ and $\Xi_2$. The graph $\mathcal{K}'$ can be chosen so that, for each $\rm j=1,2$, $\rm\Upsilon_j\setminus \mathcal{K}' $ has two connected components as well, namely $\rm\Upsilon_j\cap \Xi_1$ and $\rm\Upsilon_j\cap \Xi_2$. Proceeding with the covering $\rm\delta_{j}$ as we did with the loop $\rm\gamma_j$, $\rm j=1,2$ in the preceding case, we can connect the points $\eta_1$ and $\eta_2$, respectively, through an arc with points $\eta_1'$ and $\eta_2'$ in $\rm X(P)\setminus K'$ such that $\rm\widetilde{R}(\eta_1')$ and $\rm\widetilde{R}(\eta_2')$ are contained in the same connected component of $\rm\Upsilon\setminus\mathcal{K}'$, say $\rm\Xi_1 $. The points $\eta_1'$ and $\eta_2'$ can then be connected through an arc within $\rm\widetilde{R}^{-1}(\Xi_1)$,  and we obtain this way an arc connecting $\eta_1$ to $\eta_2$.\\
\end{proof}
%NOT ALL ANGLES ARE IRRATIONAL.
 \textbf{The surface $\rm X(P)$ has only one end. Case (B)}. We now turn to the case where
not all angles of $\rm P$ are irrational multiples of $\pi$.
Let $\rm Ver_{\mathbf{Q}}(P)$ be the set of vertices of $\rm P$ at which the interior angle is a rational multiple of $\pi$ and $\rm Ver_I(P):= Ver(P)\setminus Ver_{\mathbf{Q}}(P)$. We denote by $\rm\overline{\mathbf{S}^2(P)}$ the punctured sphere obtained by adding all rational vertices of $\rm P$ to $\rm\mathbf{S}^2(P)$. In the following diagram 
\begin{equation}
	\label{diagr}
	\rm
\xymatrix{\rm
X(P_0) \ar[d]^{\hat{\pi}} \ar@{^{(}->}[r]^{\chi_2} &\rm X(P)\ar[d]^{\tilde{\pi}}\\
\rm \mathbf{S}^2(P) \ar@{^{(}->}[r]^{\chi_1}  &\rm\overline{\mathbf{S}^2(P)}}
\end{equation}
 $\rm\chi_1$ and $\rm \chi_2$ are the natural embeddings defined when "adding rational vertices" and $\tilde{\pi}$ is the corresponding branched covering of  $\rm X(P)$ over the punctured sphere $\rm\overline{\mathbf{S}^2(P)}$. Let $\rm K\subset X(P)$ be a compact set. Our strategy will be the same, that is, to show that there is exist a compact set $\rm K\subset K'(K)\subset X(P)$ such that $\rm X(P)\setminus K'$ is connected. \\
 \indent For $0<\epsilon$ sufficiently small, the set  $\rm\hat{\pi}\circ\chi_2^{-1}(K)$ is contained in the complement of an $\epsilon$-neighborhood of the set of irrational vertices in $\rm\mathbf{S}^2(P)$. Abusing notation, we write
  \begin{equation}
	\label{kepsilon1}
	\rm
	K_{\epsilon}=\{t\in\mathbf{S}^2(P)\hspace{1mm}\mid\hspace{1mm} d(\rho(t),Ver_I(P))\geq\epsilon\},
\end{equation}
 and define $\rm U_{\epsilon}$ as in (\ref{uepsilon}). For every $\rm\xi\in\hat{\pi}^{-1}(U_{\epsilon})$ we denote by $\rm\widetilde{ U_{\epsilon}(\xi)}$ a connected lift of $\rm U_{\epsilon}$ to $\rm X(P_0)$. For every choice $\rm\Omega=\{\xi_1,\ldots,\xi_s\}\subset\hat{\pi}^{-1}(U_{\epsilon}) $ define $\rm K'(\Omega)$ to be the closure in $\rm\rm X(P)$ of $\rm \chi_2(\cup_{j=1}^s\widetilde{ U_{\epsilon}(\xi_j)})$. Choose $\Omega$ such that
 \begin{description}
\item[(i) ] The compact set $\rm K$ is contained in $\rm K'(\Omega)$.\\ 
\item[(ii)] For every simple loop $\rm\gamma\subset\tilde{\pi}(K'(\Omega))$ around a rational vertex $\rm v$ and every connected lift $\rm \widetilde{\gamma}$ of $\rm \gamma$ to $\rm X(P)$ satisfying $\rm \widetilde{\gamma}\subset K'(\Omega)$, one has that  $\rm \widetilde{\gamma}$ is contractible within $\rm K'(\Omega)$ to a point over $\rm v$. 
\end{description}
\indent For such a choice of $\Omega$, we affirm that $\rm X(P)\setminus K'(\Omega)$ is connected by arcs. Let $\rm q_1,q_2\in X(P)\setminus K'(\Omega)$. Property (ii) implies that $\rm q_1$ and $\rm q_2$ can be connected within $ \rm X(P)\setminus K'(\Omega)$ respectively through an arc with points $\rm q_1'$ and $\rm q_2'$ such that each projection $\rm \tilde{\pi}(q_j')$, $\rm j=1,2$ lies in the interior of an $\epsilon$-neighborhood in $\rm\overline{\mathbf{S}^2(P)}$ of the set of irrational vertices $\rm Ver_I(P)$.\\
\indent Define $\rm\alpha_j:=\chi_2^{-1}(q_j')$, $\rm j=1,2$. We claim that these points can be connected through an arc $\gamma$ within $\rm X(P_0)\setminus\chi_2^{-1}(K'(\Omega))$. Then $\rm\chi_2(\gamma)$ connects $\rm q_1'$ with $\rm q_2'$, and this completes the proof of Case (B).\\
\indent To prove our claim, chose a basis $\rm\Gamma:=\{B_{i_j}\}_{j=1}^{N-1}$ for the fundamental group of $\rm\mathbf{S}^2(P)$ such that, for each $\rm j=1,2$ there exist a loop $\rm B_{i_j}$ in $\Gamma$ whose index with respect to $\rm \hat{\pi}(\alpha_j)$ is equal to 1. Given that  $\rm \tilde{\pi}(q_j')$ lies in the interior of an $\epsilon$-neighborhood of $\rm Ver_I(P)$, each matrix $\rm D\circ hol(B_{i_j})$ represents a rotation by an angle  which is not a rational multiple of $\pi$.  Let $\lambda'$ be the set of angle parameters defined by our choice of basis $\Gamma$. As in the preceding situation, we consider two cases: (B.1) Rank $\rm G(\lambda')\geq 2$ and  (B.2) Rank $\rm G(\lambda')=1$. \\ 
\indent  Set $\rm\mathcal{K}=\widetilde{R}(\chi_2^{-1}(K'(\Omega)))$. For case (B.1) the graph $\Upsilon$ has only one end.  Given that the angles of the rotations $\rm D\circ hol(B_{i_j})$ are not rational multiples of $\rm \pi$ we can proceed as in case (A.1), Lemma 2. In other words, find a simple loop $\rm\gamma_j$ passing through $\rm\hat{\pi}(\alpha_j)$ for which no connected lift $\rm\widetilde{\gamma_j}$ to $\rm X(P_0)$ is closed. Then, 'making progress' in the Cayley graph $\Upsilon$, reach a points $\alpha'_1$ and $\alpha'_2$ in $\rm\widetilde{R}^{-1}(\Upsilon\setminus\mathcal{K}')$, that can be connected through an arc. 
Here, $\mathcal{K}\subset\mathcal{K}'\subset\Upsilon$ denotes a finite graph such that $\Upsilon\setminus \mathcal{K}'$ is connected. For (B.2)  set $\rm\mathcal{K}=\widetilde{R}(\chi_2^{-1}(K'(\Omega)))$ as well and proceed as in case (A.2), Lemma 2.\\

%INFINITE GENUS.
\textbf{The surface $\rm X(P)$ has infinite genus}. Here our strategy is to find a sequence of domains $\rm \{D_j\}_{j\in\mathbf{Z}}$ in $\rm X(P)$,  each homeomorphic to a torus to which we have removed a finite number of small discs and such that $\rm D_j\cap D_i=\emptyset$ whenever $\rm i\neq j$. Therefore, there exists no finite set $\rm \mathcal{C}\subset X$ of mutually non-intersecting simple closed curves with the property that $\rm X\setminus \mathcal{C}$ is a connected and planar surface.\\
\\
\emph{Remark}. When proving that $\rm X(P)$ has infinite genus, we \emph{do not} suppose that all interior angles of $\rm P$ are irrational multiples of $\pi$.\\
\\
\indent Let $\rm v$ and $\rm w$ denote vertices of $\rm P$ whose interior angles are not rational multiples of $\pi$. Denote these angles by $\rm\lambda_{i_1}\pi$ and $\rm\lambda_{i_2}\pi$ respectively. The boundary $\rm\partial P\setminus\{v,w\}$ has two connected components. Let $\rm S$ denote a connected component containing at least two sides of $\rm P$. Define
\begin{equation}
	\label{U}
	\rm 
	U_{\epsilon}:=\{ t\in \mathbf{S}^2(P)\hspace{1mm}\mid\hspace{1mm} d(\rho(t),Ver(P))\geq \epsilon\},
	\end{equation}
and let $\rm U:=U_{\epsilon}\setminus\rho^{-1}(S)$. If $\epsilon>0$ is sufficiently small, $\rm U$ can be identified, topologically, with the following figure (for the sake of simplicity, we present an image corresponding to a pentagon $\rm P$):
\begin{center}
\includegraphics[scale=0.2]{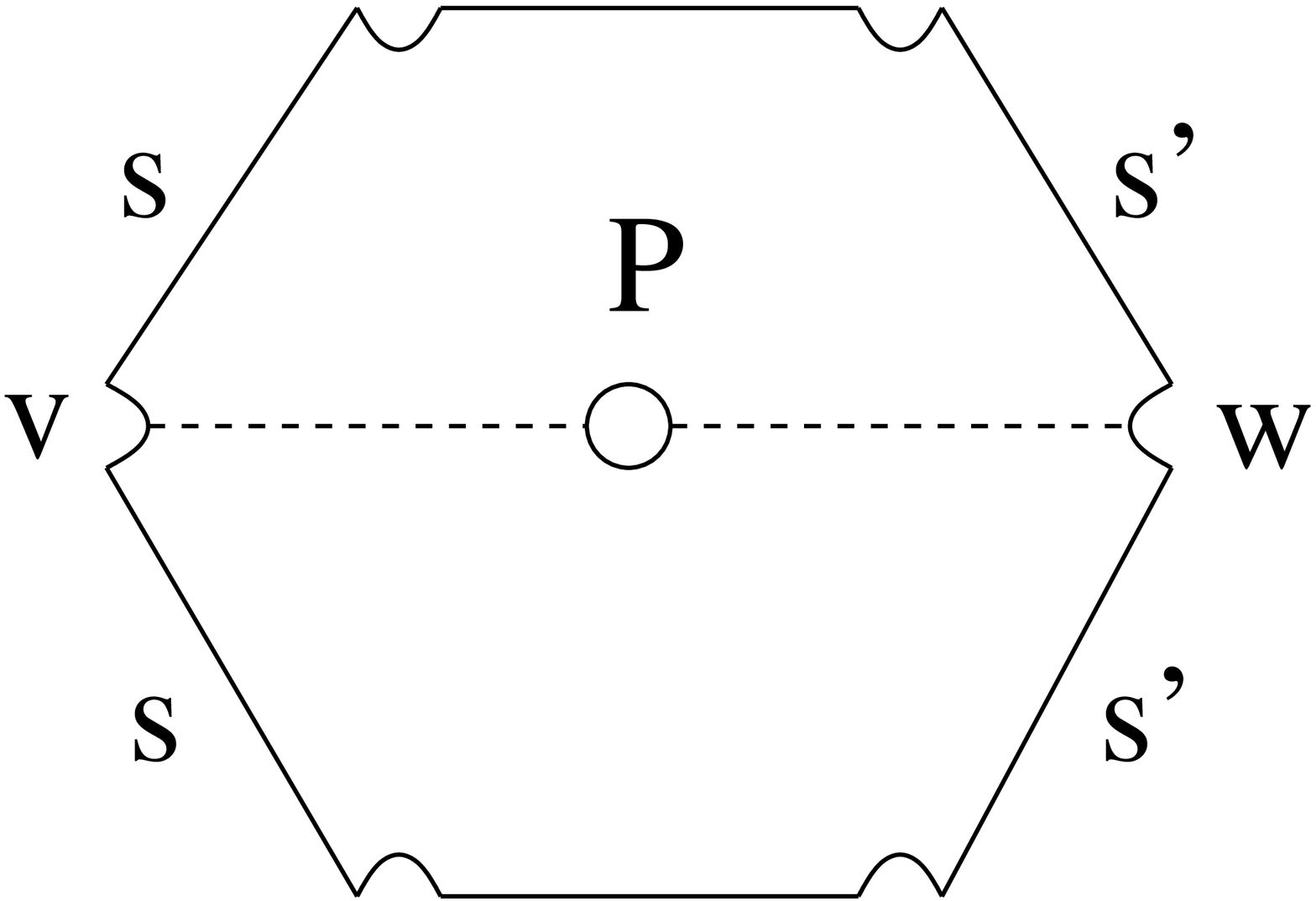}\\
Figure 5: The domain $\rm U\subset\mathbf{S}^2(P)$.
\end{center}
\indent In this figure $\rm s$ and $\rm s'$ denote the sides of $\rm P$ in $\rm S$ adjacent to the vertices $\rm v$ and $\rm w$ respectively. Let $\Gamma$ be a basis for the fundamental group of $\mathbf{S}^2(\rm P)$ containing two generators $\rm B_{i_1}$ and $\rm B_{i_2}$ in $\rm U$ whose holonomy is given by:
 \begin{equation}
  \label{rots1}
  \rm
  D\circ hol(B_{i_j})=
\begin{pmatrix}
\rm
  \cos(2\lambda_{i_j}\pi) &\rm -\sin(2\lambda_{i_j}\pi) \\
  \rm\sin(2\lambda_{i_j}\pi) & \rm\cos(2\lambda_{i_j}\pi)
\end{pmatrix}
\rm
\hspace{1cm} j=1,2.
 \end{equation}
 \indent Denote by $\rm U(m)$ a connected lift of $\rm U$ along a loop described by a word on the letters forming $\rm m\in\Gamma$. We claim that, despite the possible resonances, there is always a word $\rm m$ such that $\rm U(m)$ is homeomorphic to a torus from which we have removed a  finite number of discs. Since the angle $\rm\lambda_{i_1}\pi$ is not a rational multiple of $\pi$, for $\rm M\in\mathbf{N}$ sufficiently large, each domain of the form $\rm U(mB_{i_1}^{jM})$, $\rm j\in\mathbf{Z}\setminus 0$, contains a domain $\rm D_j$ homeomorphic to a torus and such that $\rm D_j\cap D_i=\emptyset$, whenever $\rm i\neq j$. \\
 \indent First, identify $\rm U$ and the loops $\rm B_{i_1}$ and $\rm B_{i_2}$ with the following figure 
 \begin{center}
\includegraphics[scale=0.28]{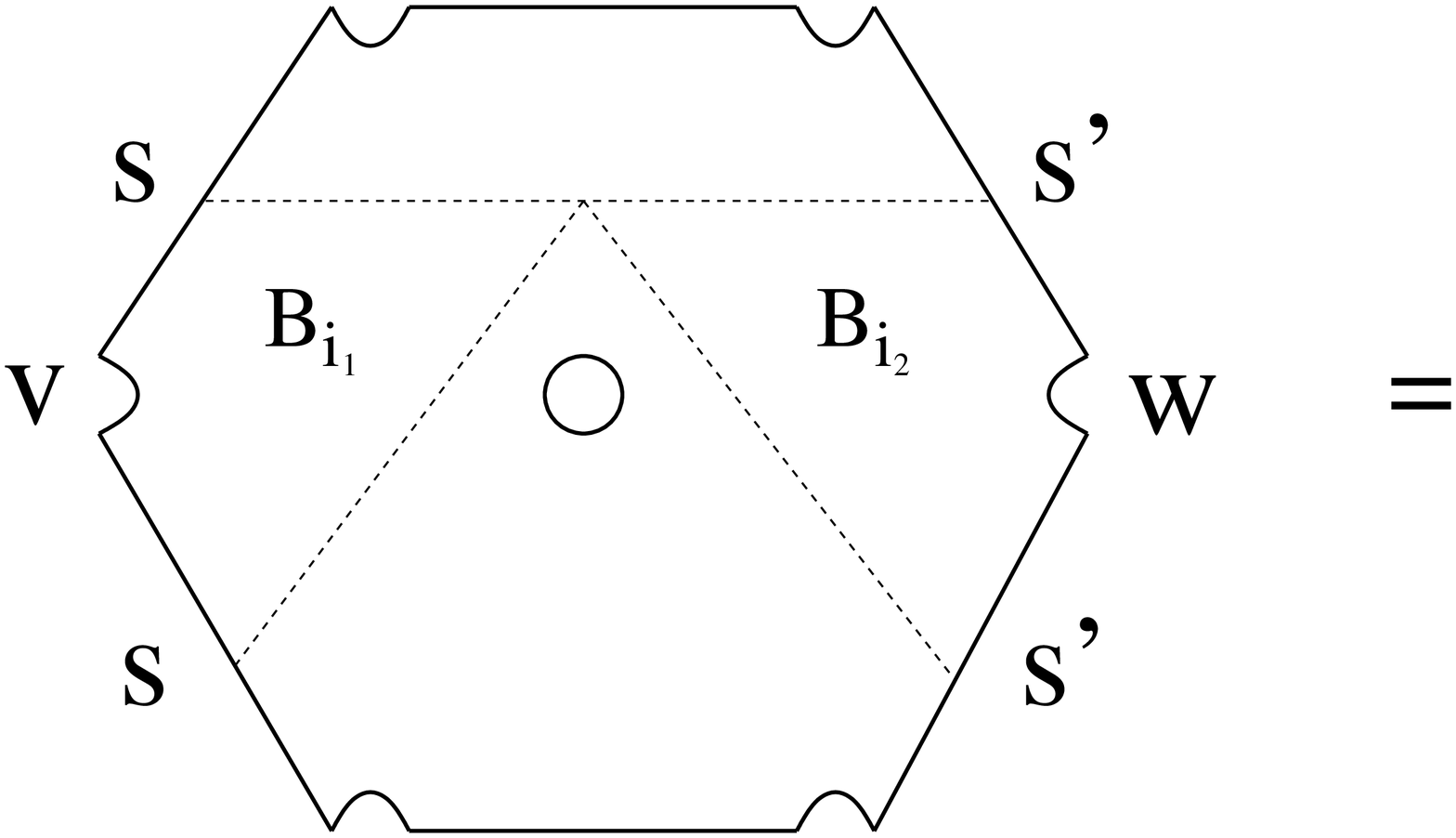}\hspace{4mm}
\includegraphics[scale=0.25]{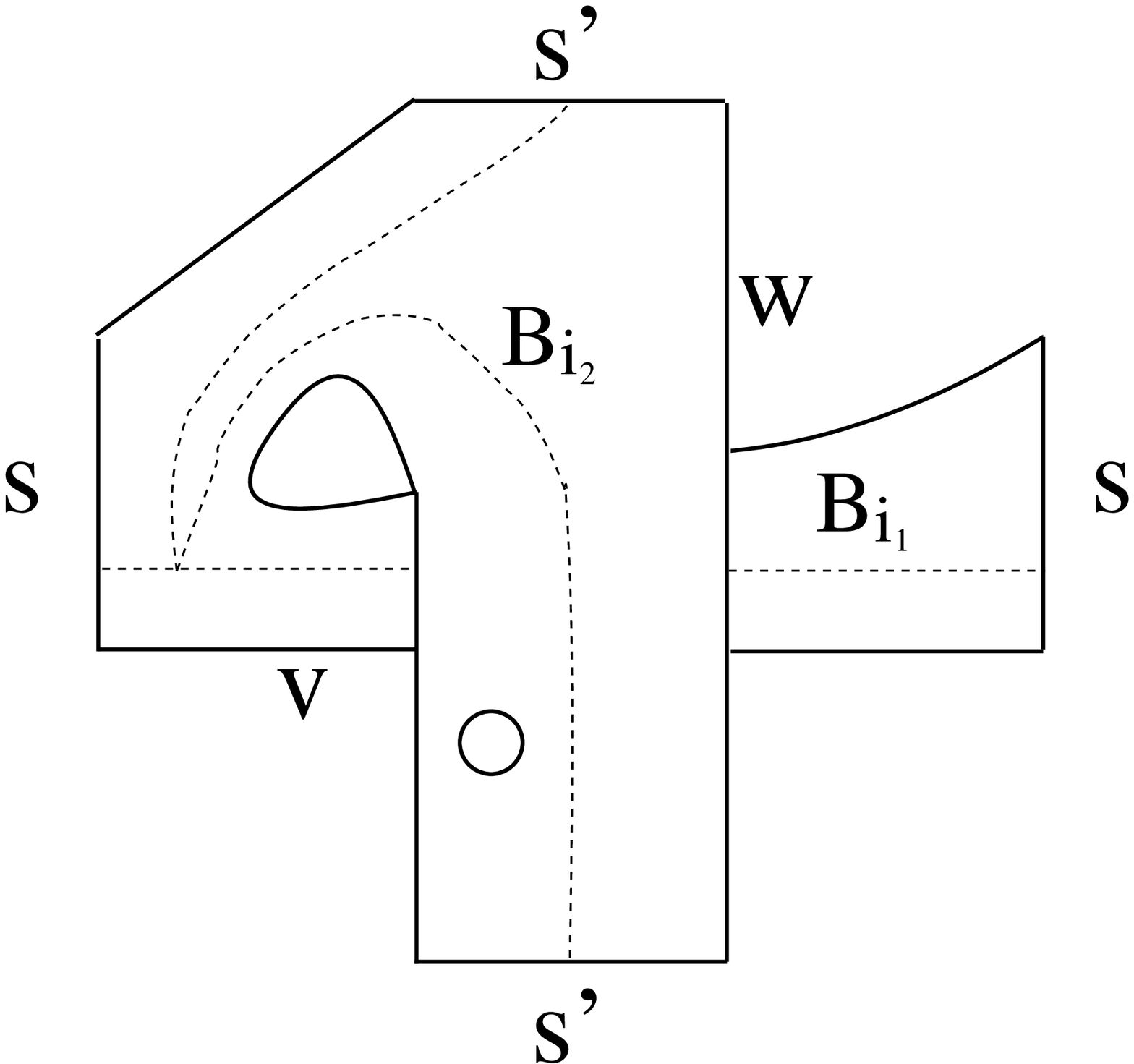}\\
Figure 6: The loops $\rm B_{i_1}$ and $\rm B_{i_2}$ in $\rm U$.
\end{center}
 \indent We denote by $\rm [m_1,m_2]=m_2^{-1}m_1^{-1}m_2m_1$ the commutator of two words in $\Gamma$. We proceed by cases, given by the resonances of the parameter $\lambda$.
 %CASE 1
 \begin{case}
 	\label{c1}
 We suppose that $\lambda$ is totally irrational. Then \linebreak $\rm U([B_{i_1}^{-1},B_{i_2}][B_{i_1},B_{i_2}])$ is homeomorphic to a torus from which we have removed a finite number of discs. We explain this step by step. A connected lift of a commutator $\rm [B_{i_1},B_{i_2}]$ to the covering $\rm X(P_0)$ via  $\hat{\pi}$ is a loop, for $\rm Aut(\hat{\pi})$ is abelian. Given that $\lambda$ has no non-trivial resonances, neither $\rm\lambda_{i_1}+\lambda_{i_2}$ nor $\rm \lambda_{i_1}-\lambda_{i_2}$ is an integer.  We deduce that the  lift $\rm U([B_{i_1},B_{i_2}])$ is of the form
 \begin{center}
 	\label{I}
\includegraphics[scale=0.2]{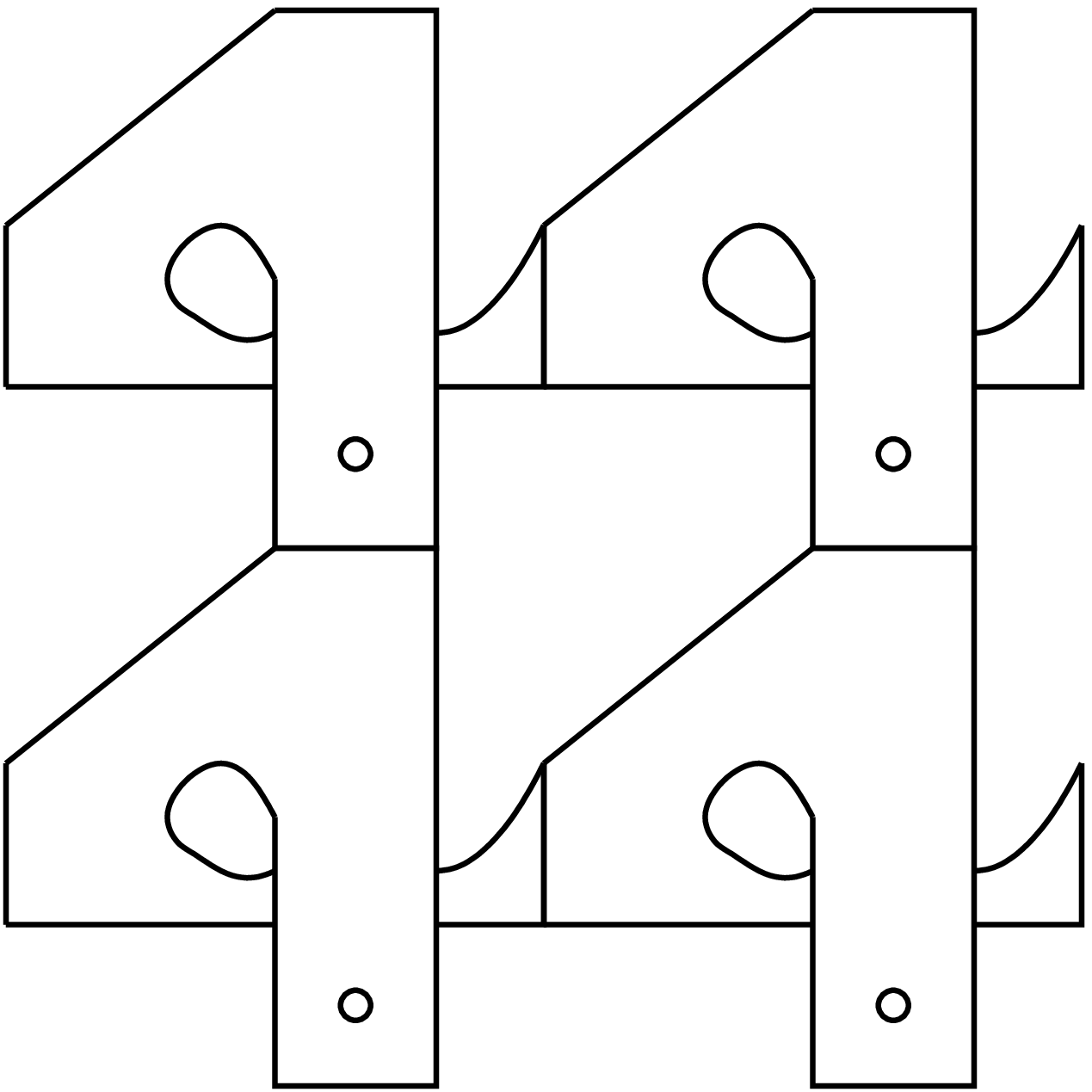}\hspace{15mm}\includegraphics[scale=0.3]{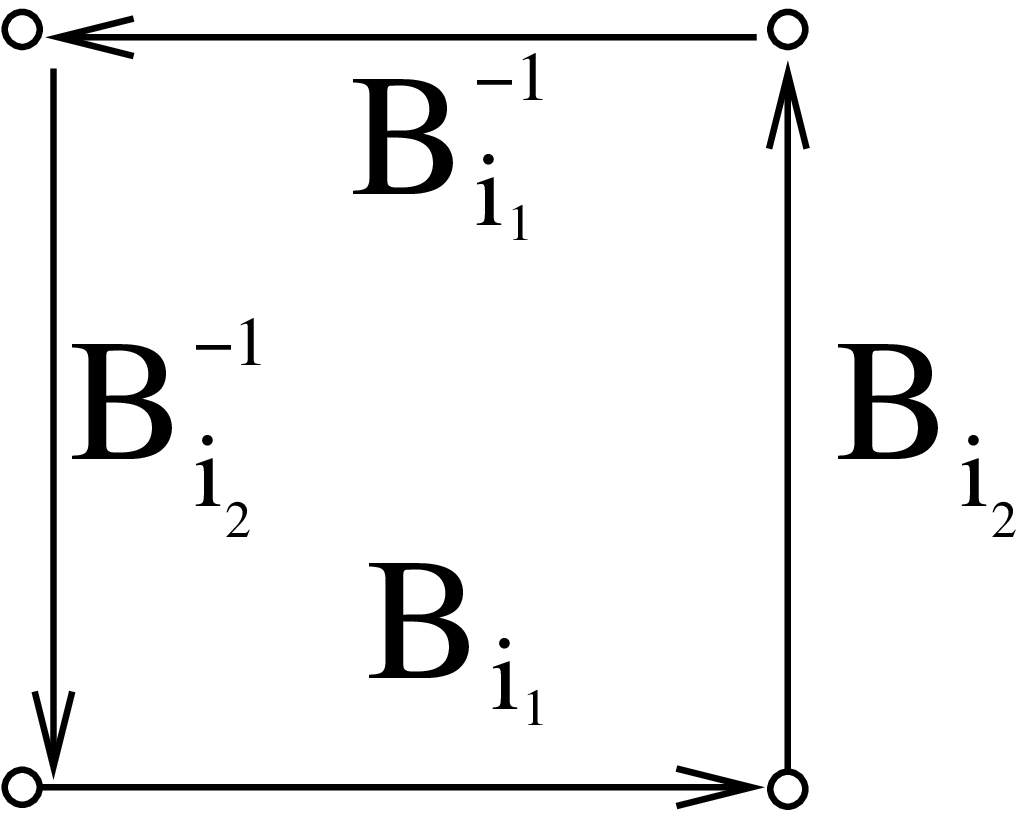}\\
Figure 7: The lift of the commutator $\rm[B_{i_1},B_{i_2}]$.

\end{center}
The commutativity of  $\rm Aut(\hat{\pi})$ and the lack of non-trivial resonances for $\lambda$, imply that the connected lift of the product\linebreak $\rm[B_{i_1}^{-1},B_{i_2}][B_{i_1},B_{i_2}]$ is "homeomorphic" to the capital letter $\rm B$. In terms of figures, $\rm U([B_{i_1}^{-1},B_{i_2}][B_{i_1},B_{i_2}])$ is given by
\begin{center}
\includegraphics[scale=0.2]{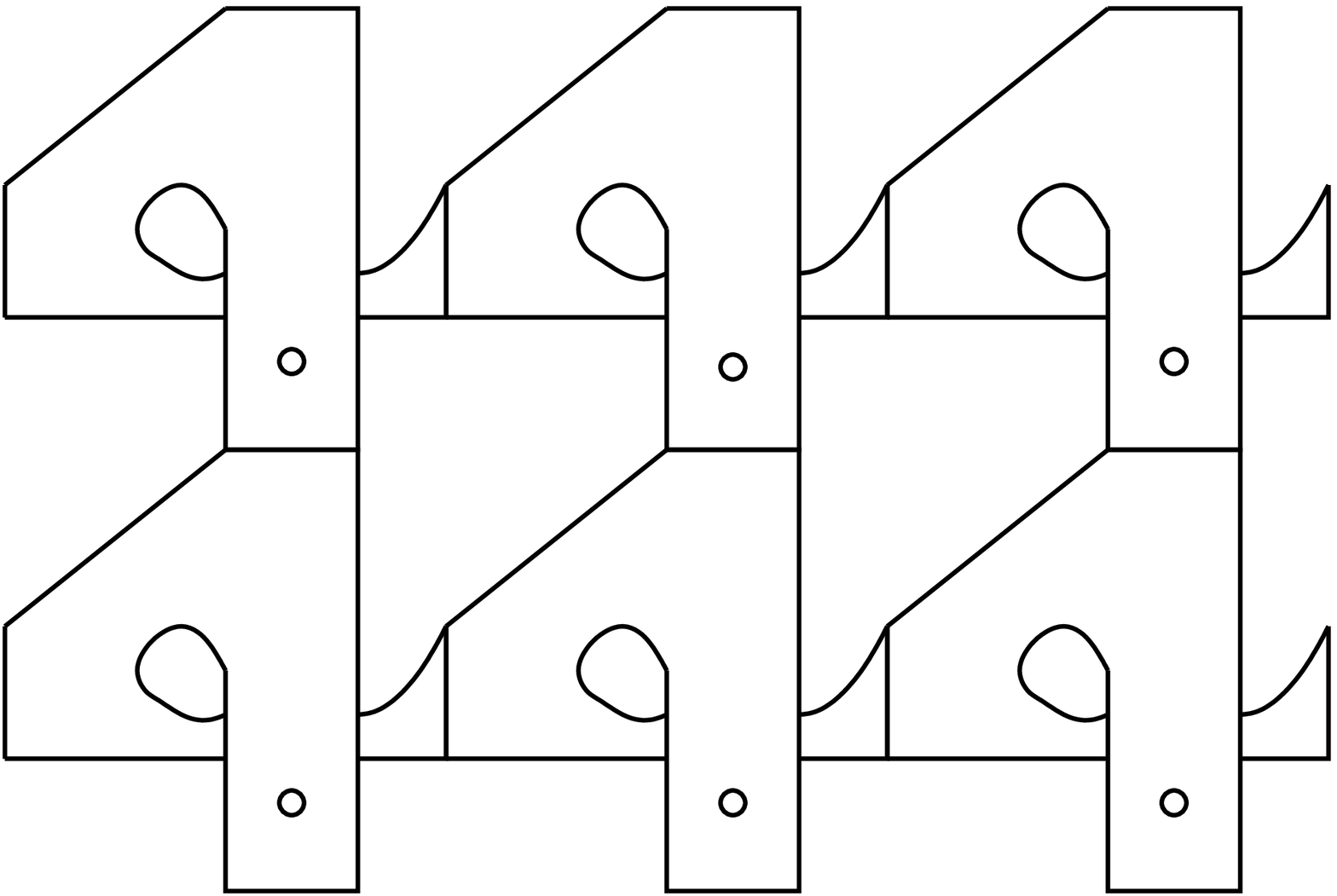}\hspace{15mm}\includegraphics[scale=0.3]{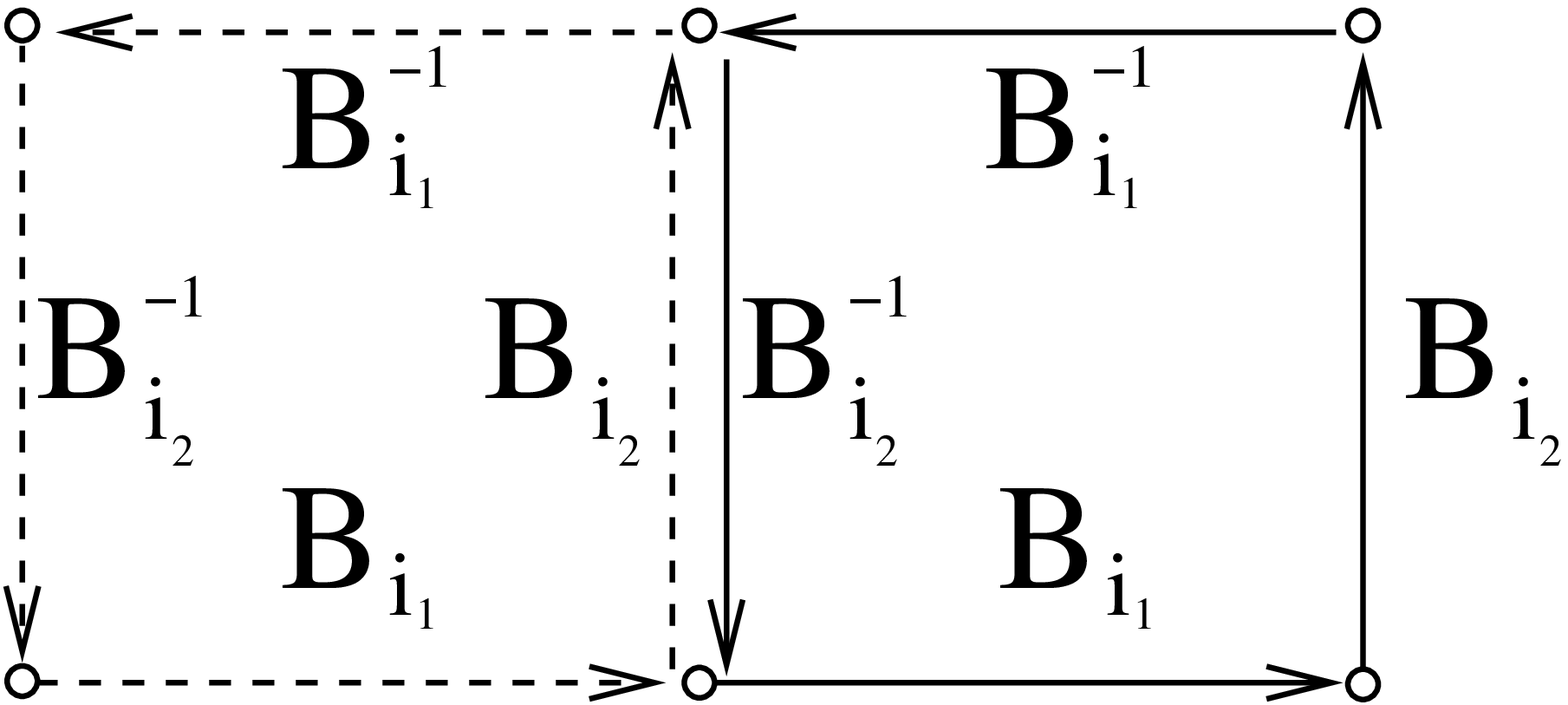}\\
Figure 8: The lift of the product of commutators $\rm[B_{i_1}^{-1},B_{i_2}][B_{i_1},B_{i_2}]$.

\end{center}
 Trivially, the domain $\rm U([B_{i_1}^{-1},B_{i_2}][B_{i_1},B_{i_2}])$ is obtained from the identification of two connected components A and A':
\begin{center}
\includegraphics[scale=0.25]{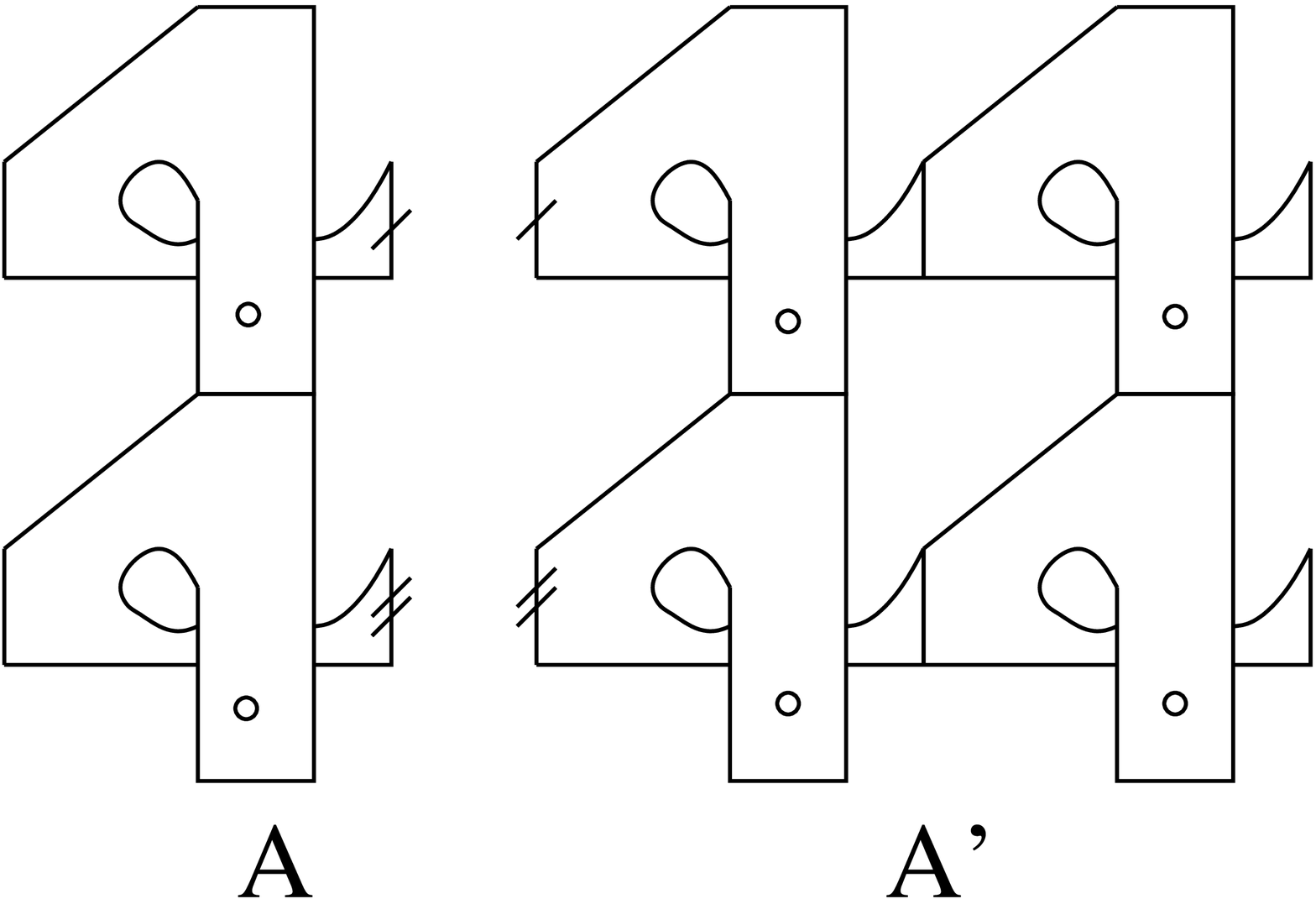}\\
Figure 9: The domain $\rm U([B_{i_1}^{-1},B_{i_2}][B_{i_1},B_{i_2}])$.
\end{center}
and this identification defines a torus to which one has removed a finite number of discs: 
 \begin{center}
\includegraphics[scale=0.25]{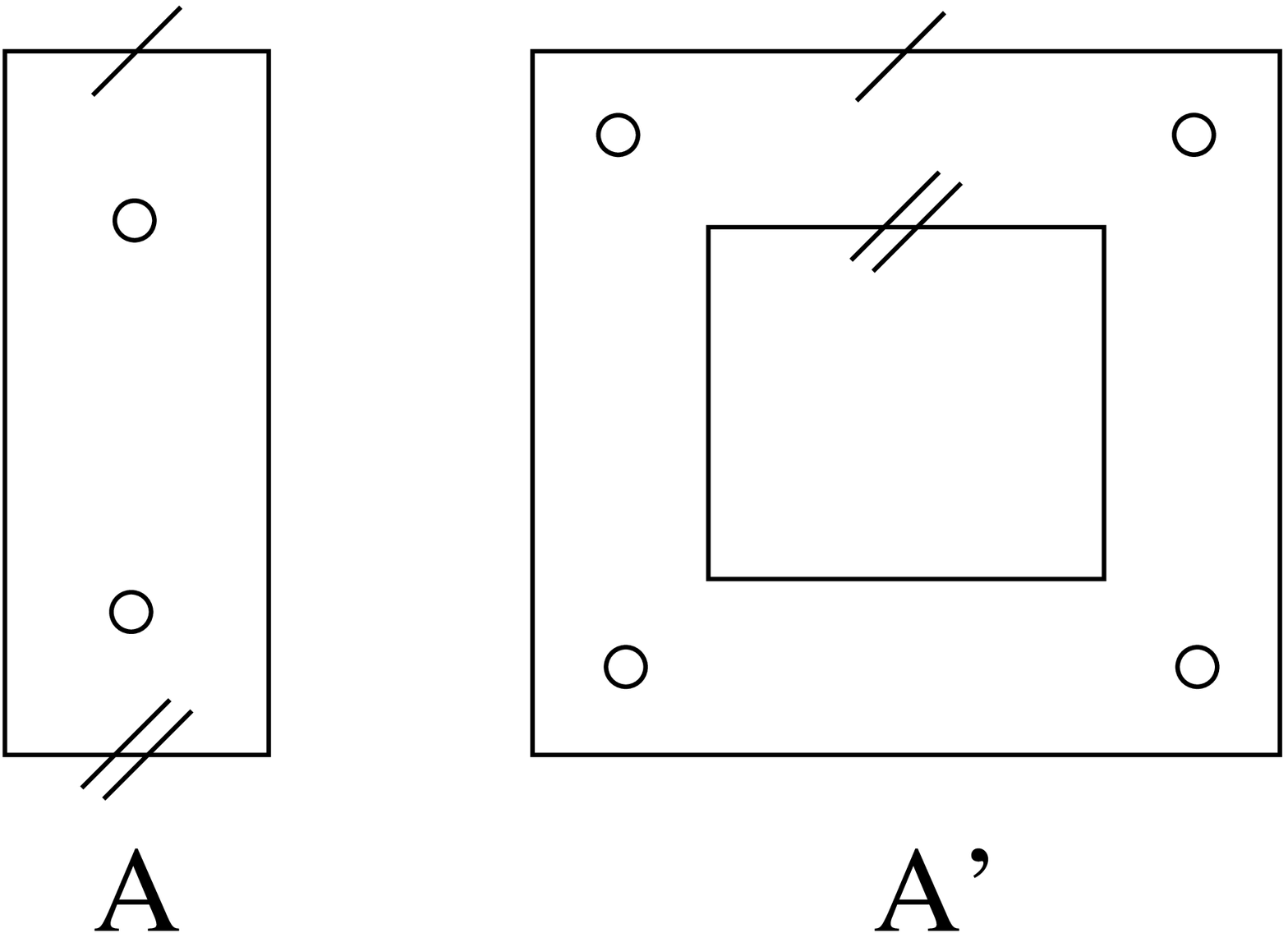}\\
Figure 10: The domain $\rm U([B_{i_1}^{-1},B_{i_2}][B_{i_1},B_{i_2}])$ is a "punctured" torus.
\end{center}
  \end{case}
  %CASE 2
 \begin{case}
In the preceding case no element of  $\rm R :=\{\rm\lambda_{i_1}+\lambda_{i_2}, \rm\lambda_{i_1}-\lambda_{i_2}, \rm2\lambda_{i_1}+\lambda_{i_2},\rm2\lambda_{i_1}-\lambda_{i_2}\}$ was an integer. This fact let us arrive to figure 8. Henceforth we deal with all possible subcases defined by non-empty subsets of  $\rm R(\mathbf{Z}):= R\cap \mathbf{Z}$. Define 
$$
(2.1)\hspace{1mm} \rm  R(\mathbf{Z})=\{2\lambda_{i_1}+\lambda_{i_2}\},\hspace{1mm}(2.2)\hspace{1mm} \rm  R(\mathbf{Z})=\{2\lambda_{i_1}-\lambda_{i_2}\},$$ $$\hspace{1mm}(2.3)\hspace{1mm}\rm  R(\mathbf{Z})=\{\lambda_{i_1}-\lambda_{i_2}\},\hspace{1mm}\text{and}\hspace{1mm}(2.4)\hspace{1mm}R(\mathbf{Z})=\{\lambda_{i_1}+\lambda_{i_2}\}
$$
\indent Remark that these are all possible non-empty subsets of $\rm  R\cap \mathbf{Z}$. Take as example, $\rm R(\mathbf{Z})=\{2\lambda_{i_1}+\lambda_{i_2},\lambda_{i_1}+\lambda_{i_2}\}$. This set of "resonances" will never occur, for the difference of its elements is equal to, say, $\rm\lambda_{i_1}$, which by hypotheses is not a rational number. All other possible cases not figuring in the list (2.1)-(2.4) are excluded by similar arguments. We now proceed explaining in detail how to obtain in subcase (2.1) a domain homeomorphic to a torus from which we have removed a finite number of discs.\\
\\
%SUBCASE 2.1
\textbf{Subcase (2.1)}. When neither $\rm\lambda_{i_1}+\lambda_{i_2}$, nor $\rm\lambda_{i_1}-\lambda_{i_2}$ are integers,  the  lift of the commutator $\rm U([B_{i_1},B_{i_2}])$ is given by figure 7. Given that $\rm 2\lambda_{i_1}+\lambda_{i_2}$ is an integer, a connected lift of the word $\rm B_{i_2}B_{i_1}^2$ to $\rm X(P_0)$ is a loop. We depict the domain $\rm U(B_{i_2}B_{i_1}^2[B_{i_1},B_{i_2}])$:
\begin{center}
\includegraphics[scale=0.2]{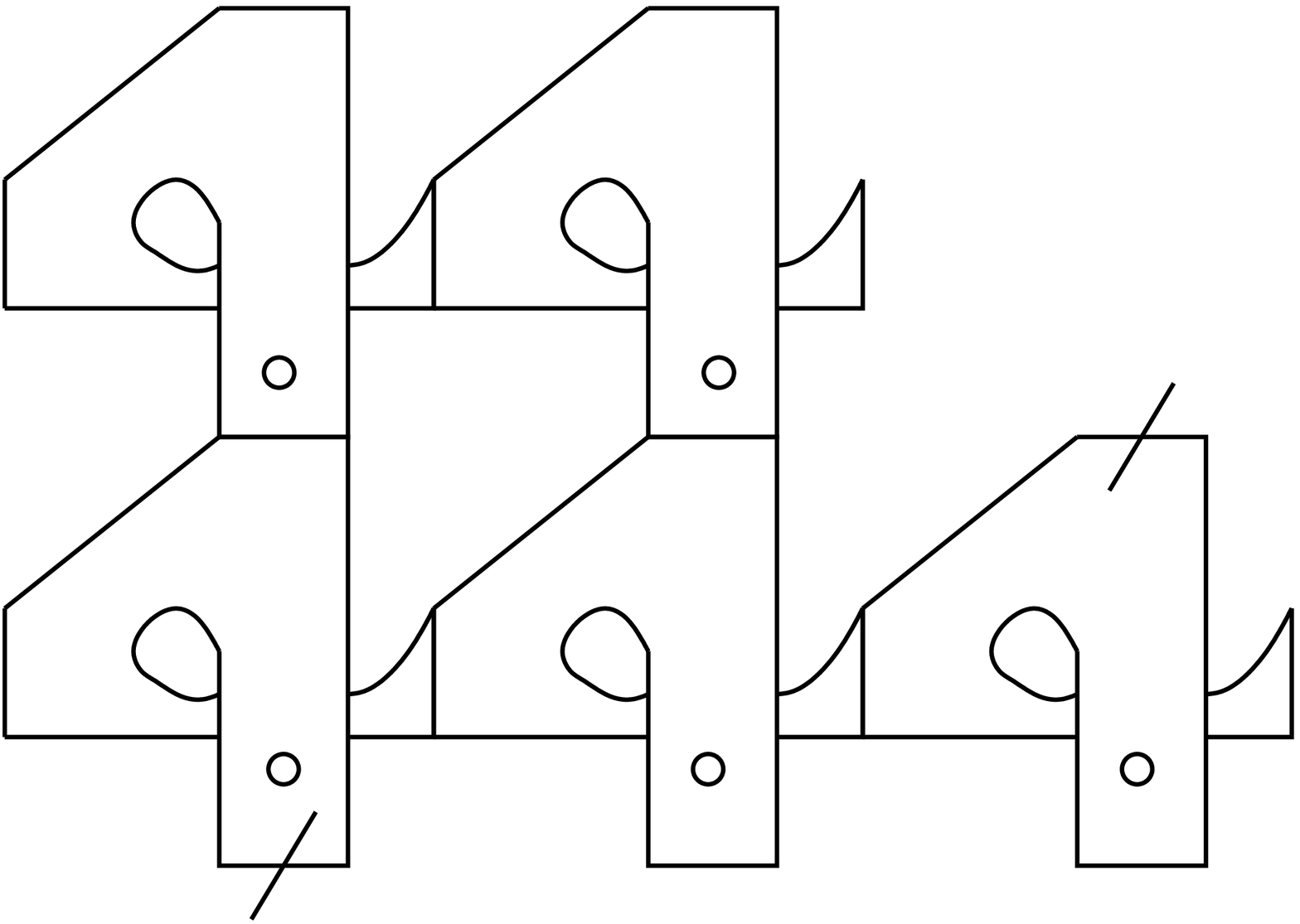}\hspace{15mm}\includegraphics[scale=0.3]{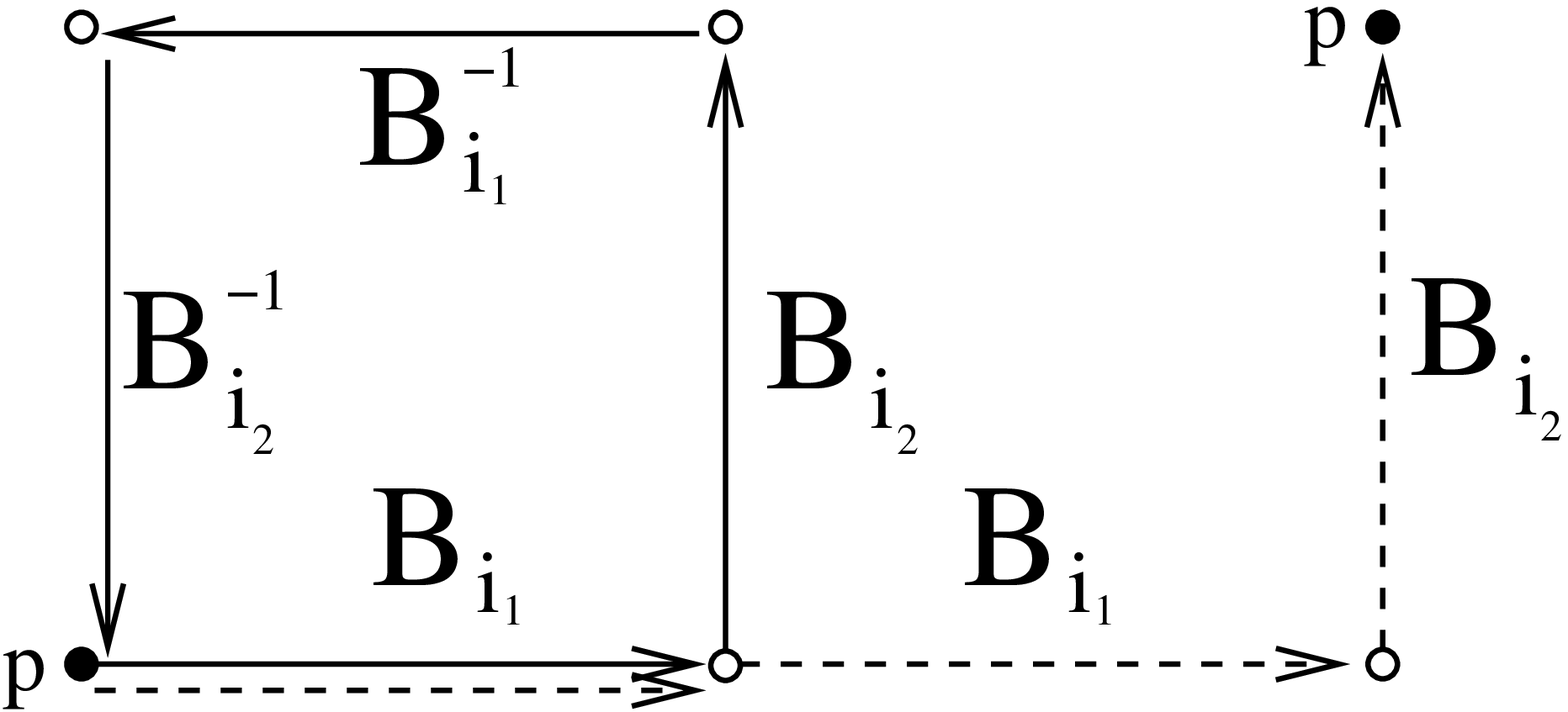}\\
Figure 11: The domain $\rm U(B_{i_2}B_{i_1}^2[B_{i_1},B_{i_2}])$. 
\end{center}
 \end{case}
In the preceding figure, points labeled with the letter $\rm p$ are identified. With a suitable homeomorphism, this figure can be simplified to: 
 \begin{center}
\includegraphics[scale=0.2]{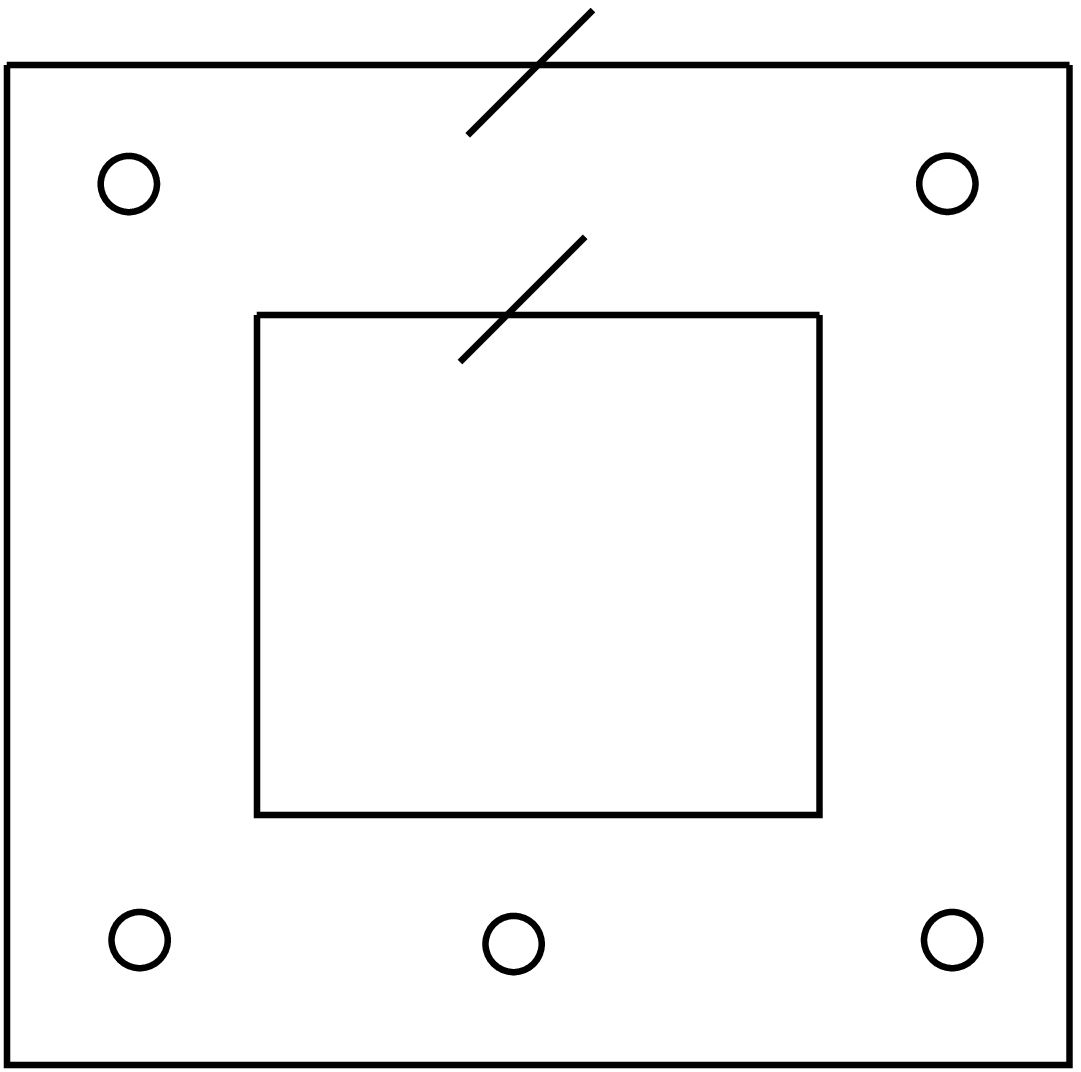}\\
Figure 12: Simplification of  $\rm U(B_{i_2}B_{i_1}^2[B_{i_1},B_{i_2}])$. 
\end{center}
which is clearly homeomorphic to a torus from which we have removed a finite number of discs. \\
%SUBCASE 2.2
\textbf{Subcase (2.2)}.  Proceeding as in the preceding subcase, we obtain that $\rm U(B_{i_1}^2B_{i_2}^{-1}[B_{i_1},B_{i_2}^{-1}])$ corresponds to the figure:
\begin{center}
\includegraphics[scale=0.2]{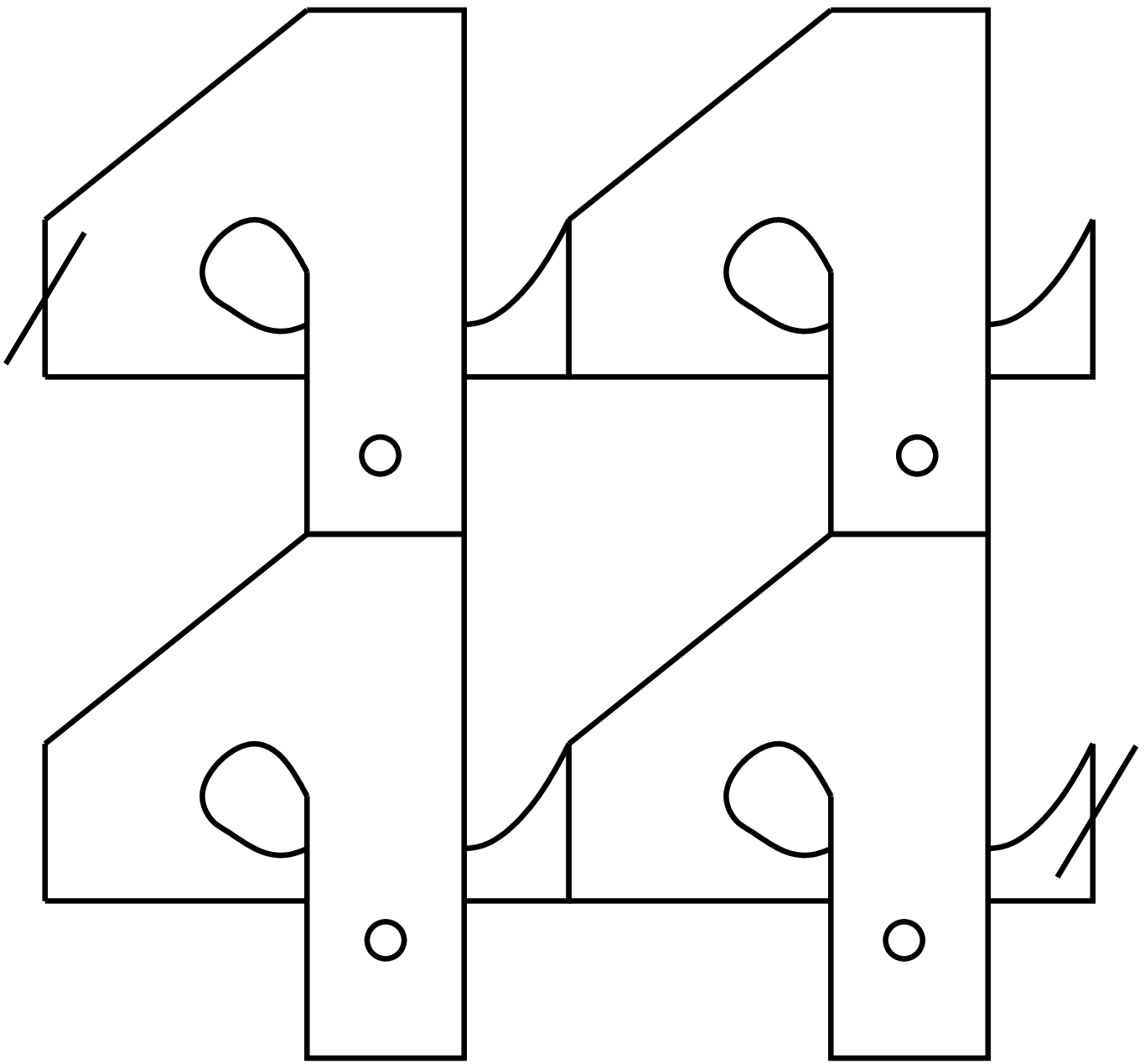}\hspace{15mm}\includegraphics[scale=0.3]{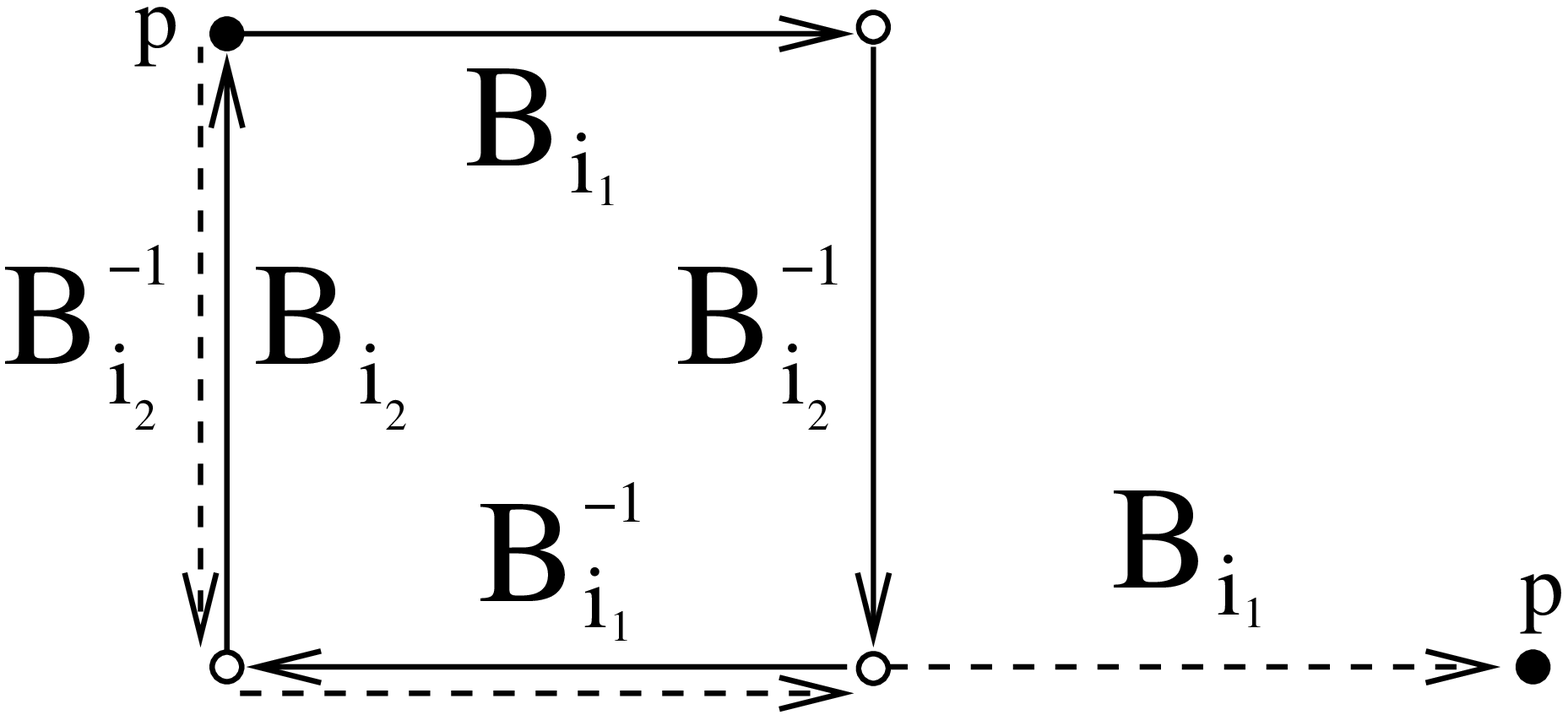}\\
Figure 13: The domain $\rm U(B_{i_1}^2B_{i_2}^{-1}[B_{i_1},B_{i_2}^{-1}])$. 
\end{center}
With a suitable homeomorphism, one can deform the preceding figure into a "punctured" torus:
 \begin{center}
\includegraphics[scale=0.2]{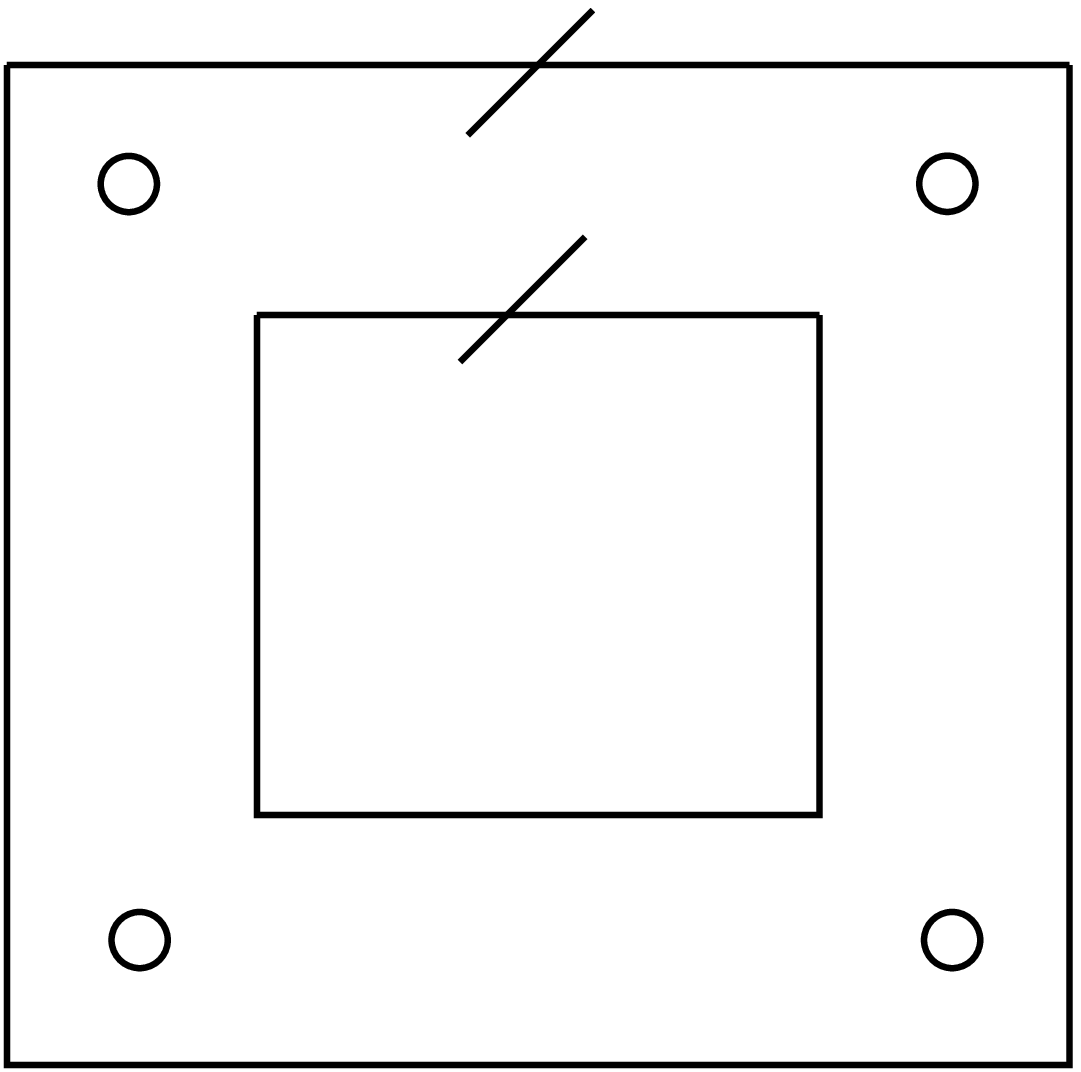}\\
Figure 14: Simplification of  $\rm U(B_{i_1}^2B_{i_2}^{-1}[B_{i_1},B_{i_2}^{-1}])$. 
\end{center}
%SUBCASE 2.3
\textbf{Subcase (2.3)}. In this situation the domain $\rm U(B_{i_2}^{-1}B_{i_1})$ is homeomorphic to a disc from which we have removed a finite number of discs:
\begin{center}
\includegraphics[scale=0.3]{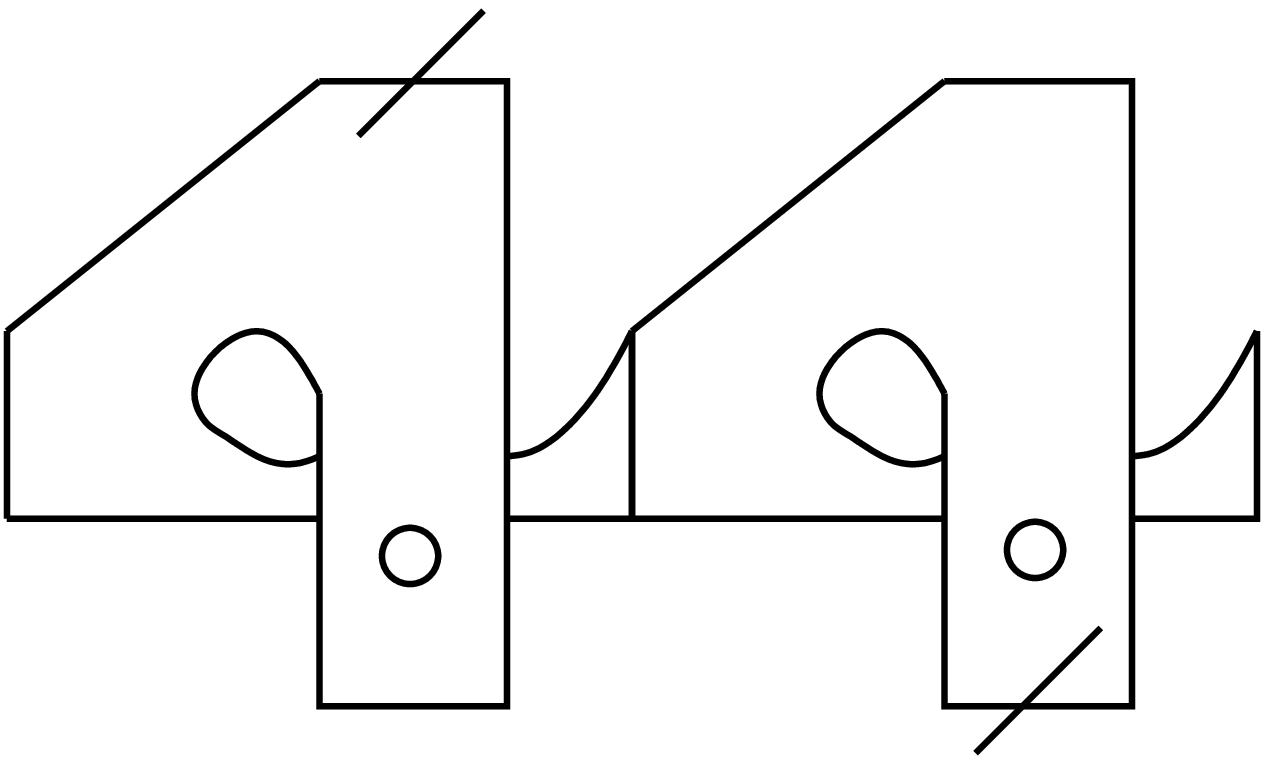}\hspace{15mm}\includegraphics[scale=0.3]{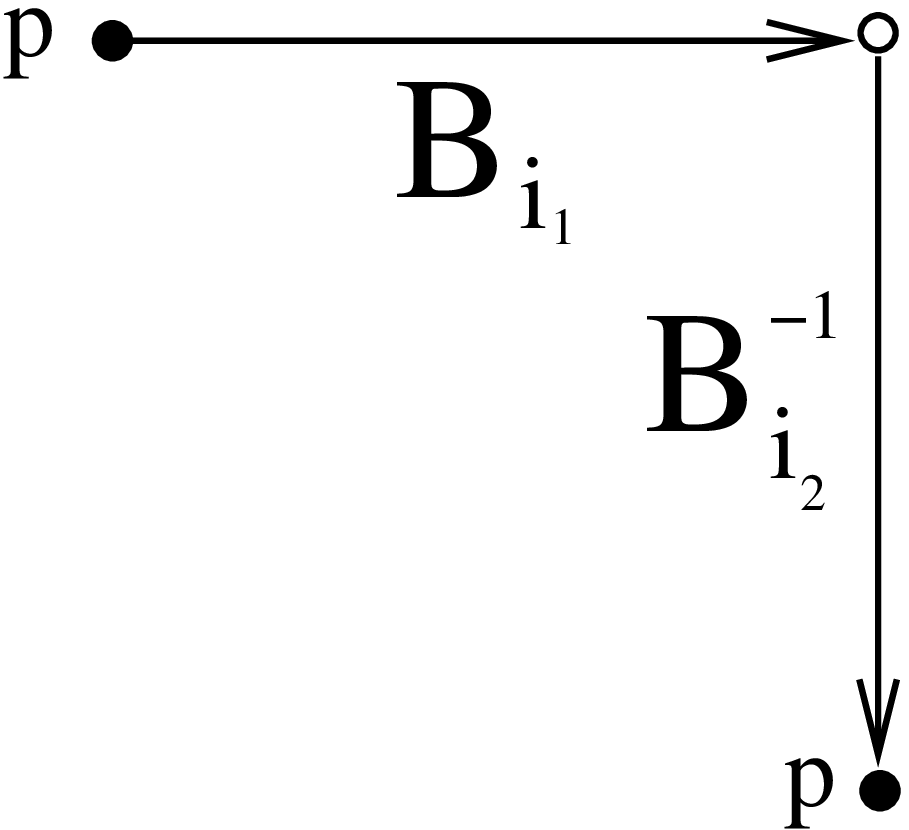}\\
Figure 15: The domain  $\rm U(B_{i_2}^{-1}B_{i_1})$.
\end{center}
\indent Then, $\rm U(B_{i_1}^{-1}B_{i_2}B_{i_1}B_{i_2}^{-1}B_{i_1})$ corresponds to the figure
\begin{center}
\hspace{1mm}\includegraphics[scale=0.3]{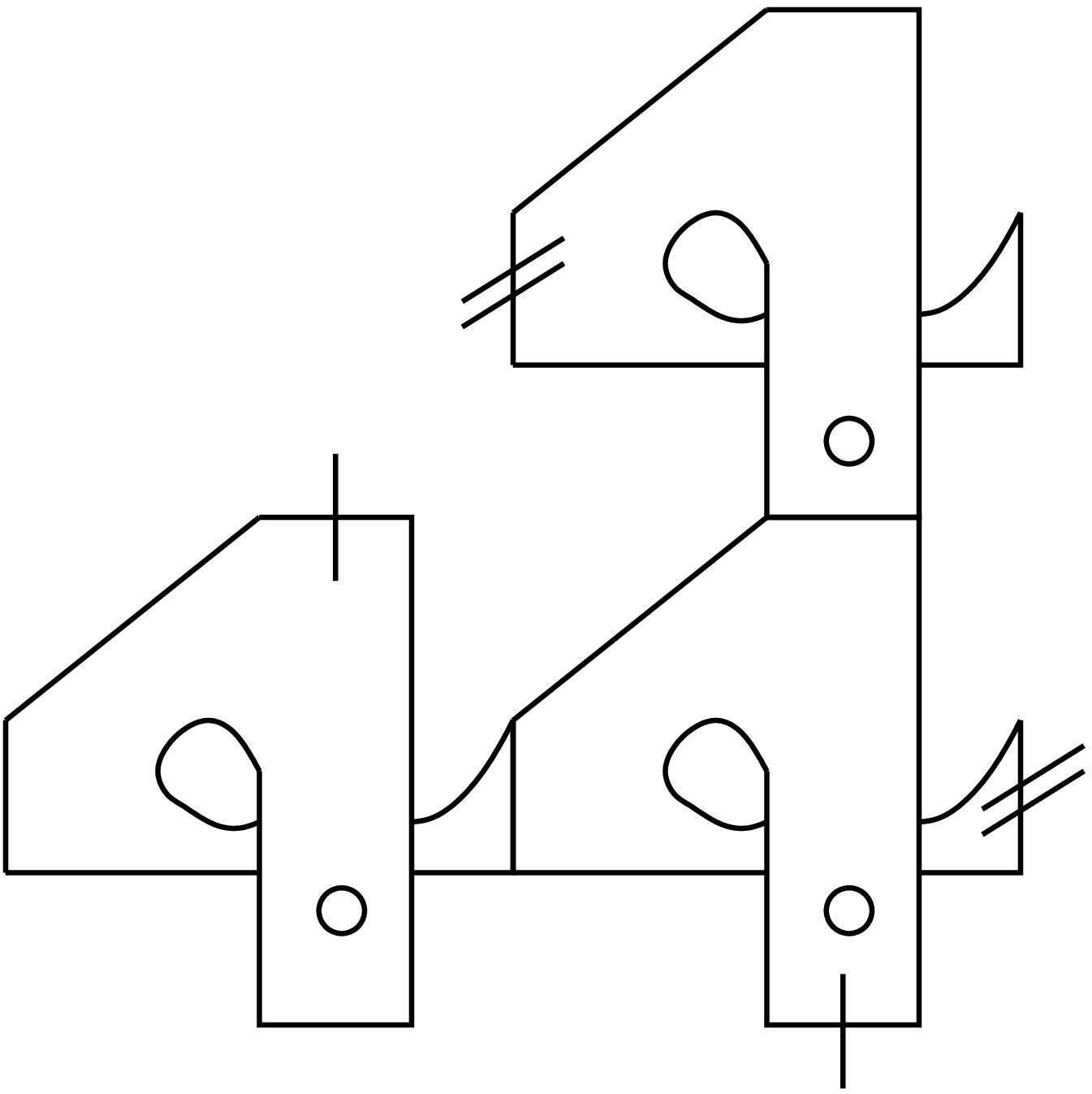}\hspace{15mm}\includegraphics[scale=0.25]{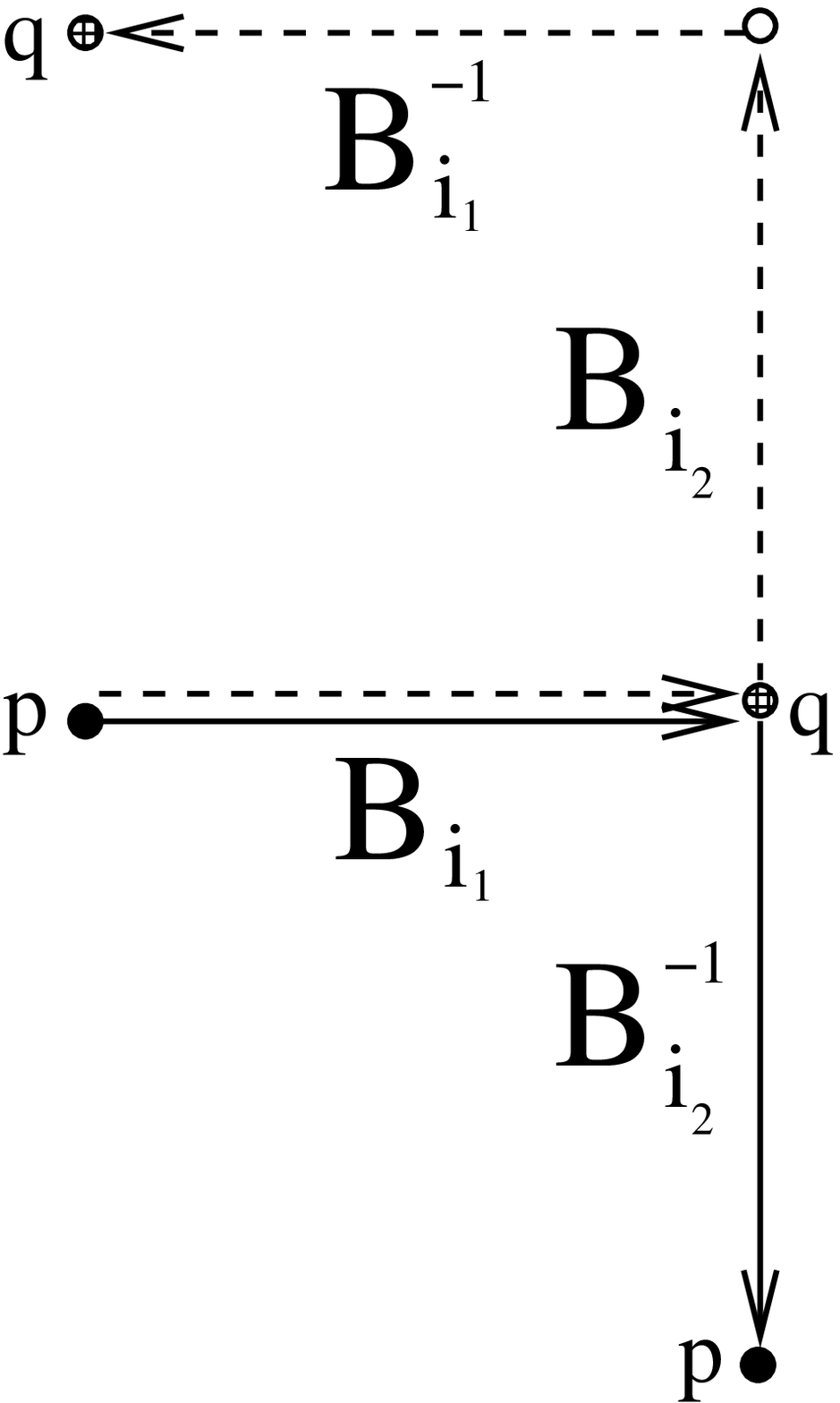}\\
Figure 16: The domain   $\rm U(B_{i_1}^{-1}B_{i_2}B_{i_1}B_{i_2}^{-1}B_{i_1})$.
\end{center}
\indent which is homeomorphic to a "punctured" torus as well.\\
%CASE 2.4 
\textbf{Subcase (2.4)}. In this case, $\rm P$ has at least four sides. In order to "see" the punctured torus we present an argumentation based on the pentagon presented in figure 5. However, it remains valid for any polygon having more than three sides. If $\rm\lambda_{i_1}+\lambda_{i_2}$ is an integer, the lift of  $\rm B_{i_2}B_{i_1}$ to the covering $\rm X(P_0)$ via $\hat{\pi}$ is a loop. The domain $\rm U(B_{i_2}B_{i_1})$ is then obtained from two copies of figure 5 after the following identifications:
\begin{center}
\hspace{1mm}\includegraphics[scale=0.2]{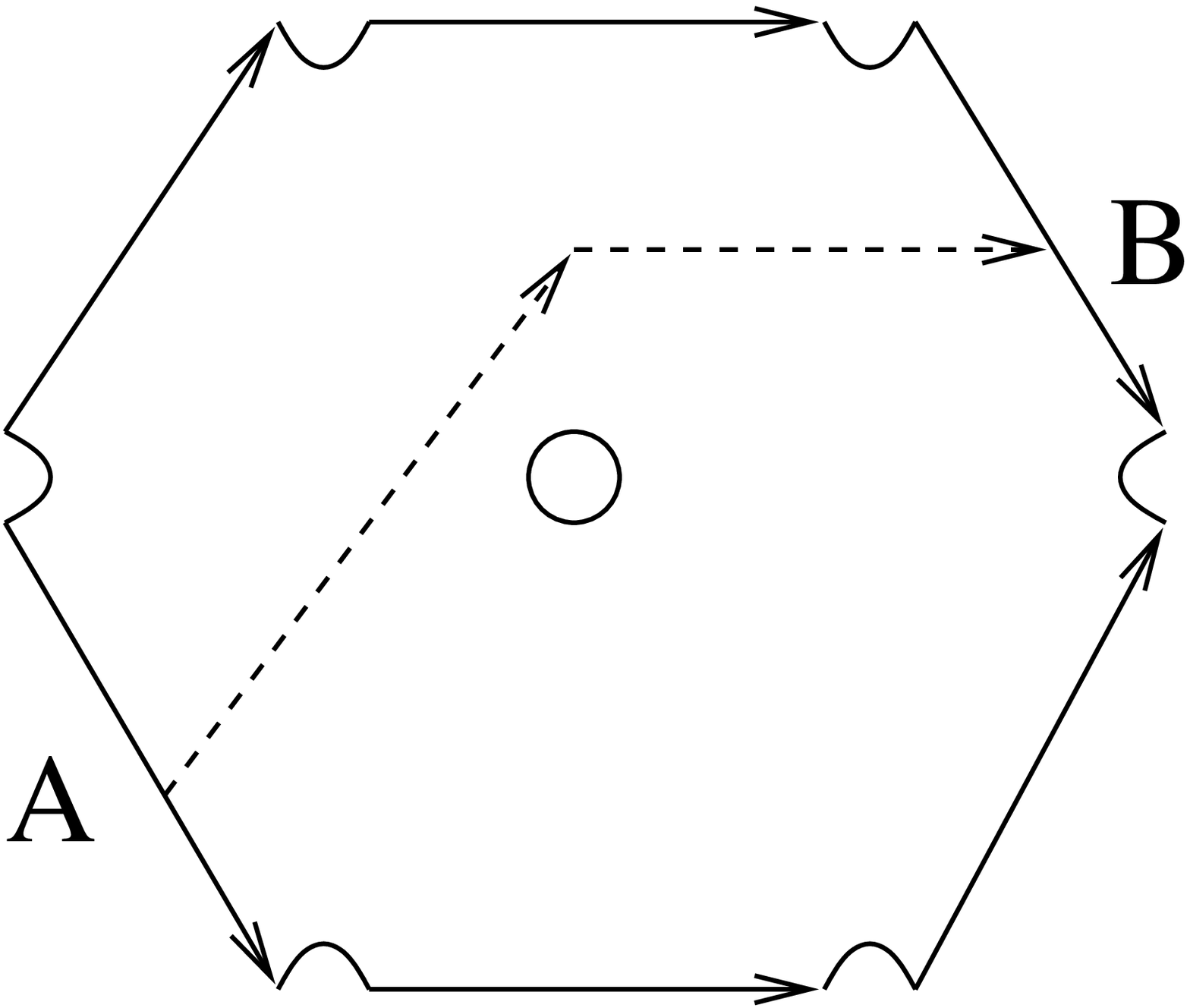}\hspace{15mm}\hspace{1mm}\includegraphics[scale=0.2]{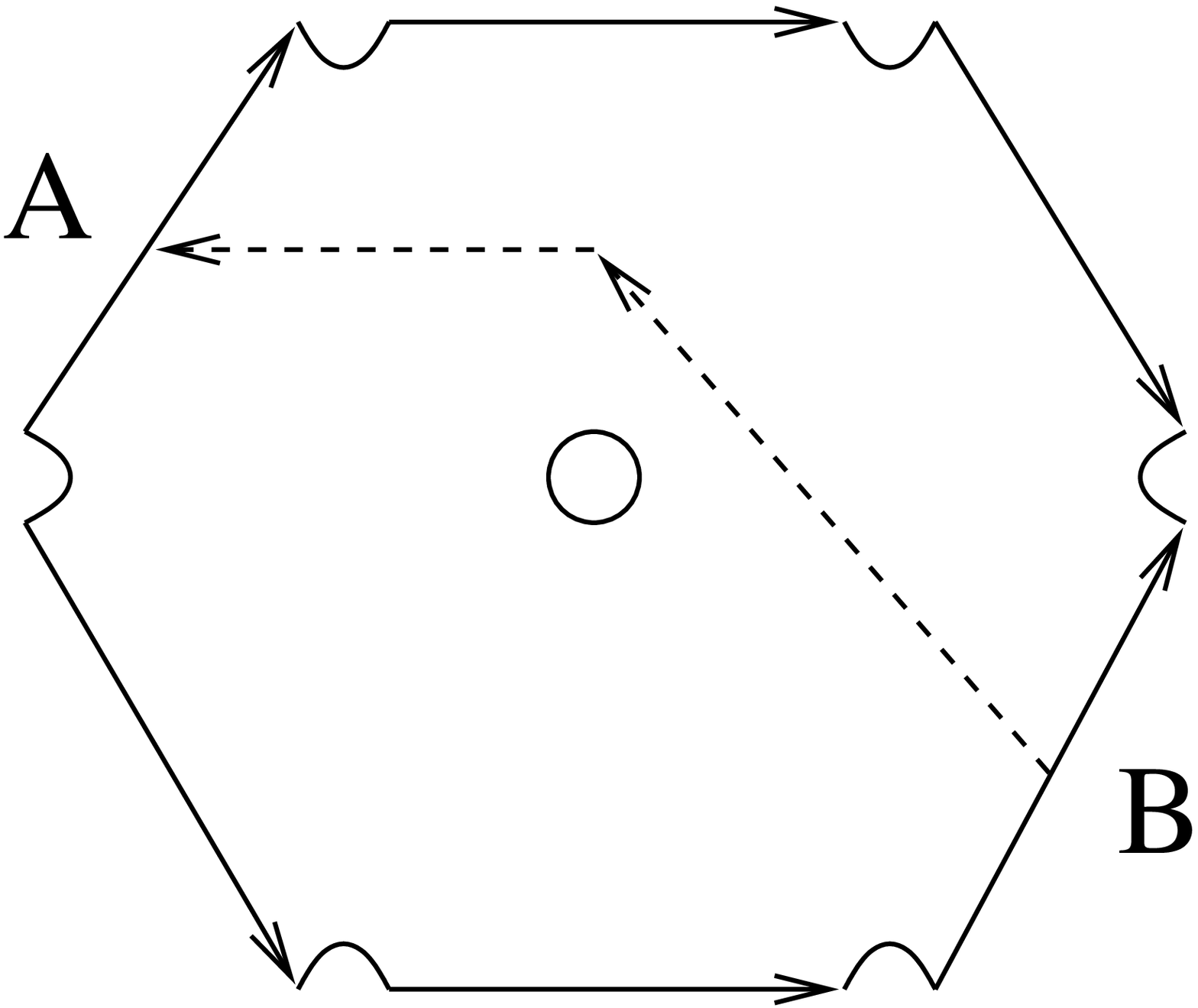}\\
Figure 17: the domain $\rm U(B_{i_2}B_{i_1})$.
\end{center}
\indent Clearly, $\rm U(B_{i_2}B_{i_1})$ is homeomorphic to a disc from which we have removed some discs. In the following figure we show the identifications defining the domain  $\rm U(B_{i_1}B_{i_2}^2B_{i_1})$
\begin{center}
\hspace{1mm}\includegraphics[scale=0.2]{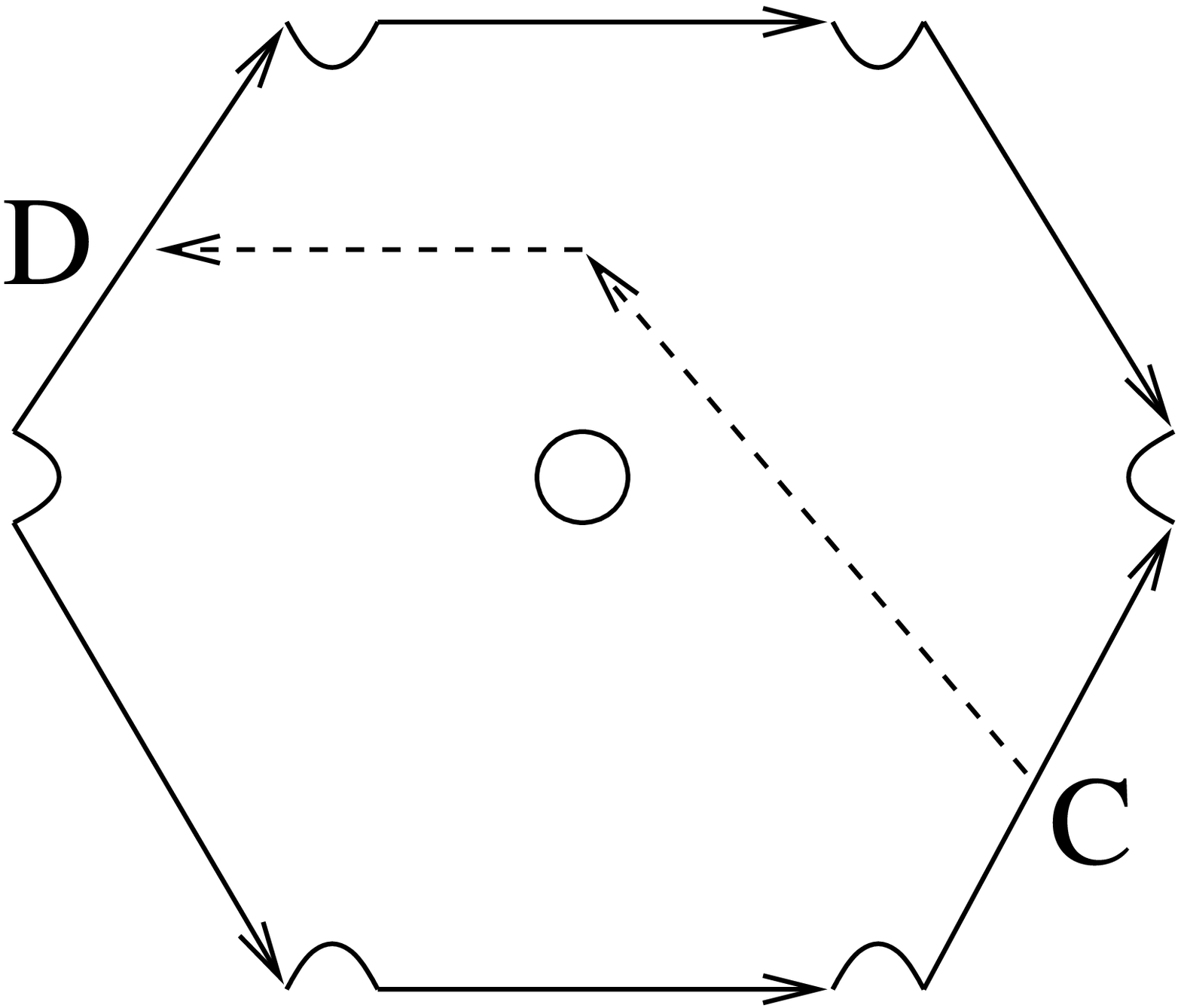}\hspace{5mm}\includegraphics[scale=0.2]{p2}\hspace{5mm}\includegraphics[scale=0.2]{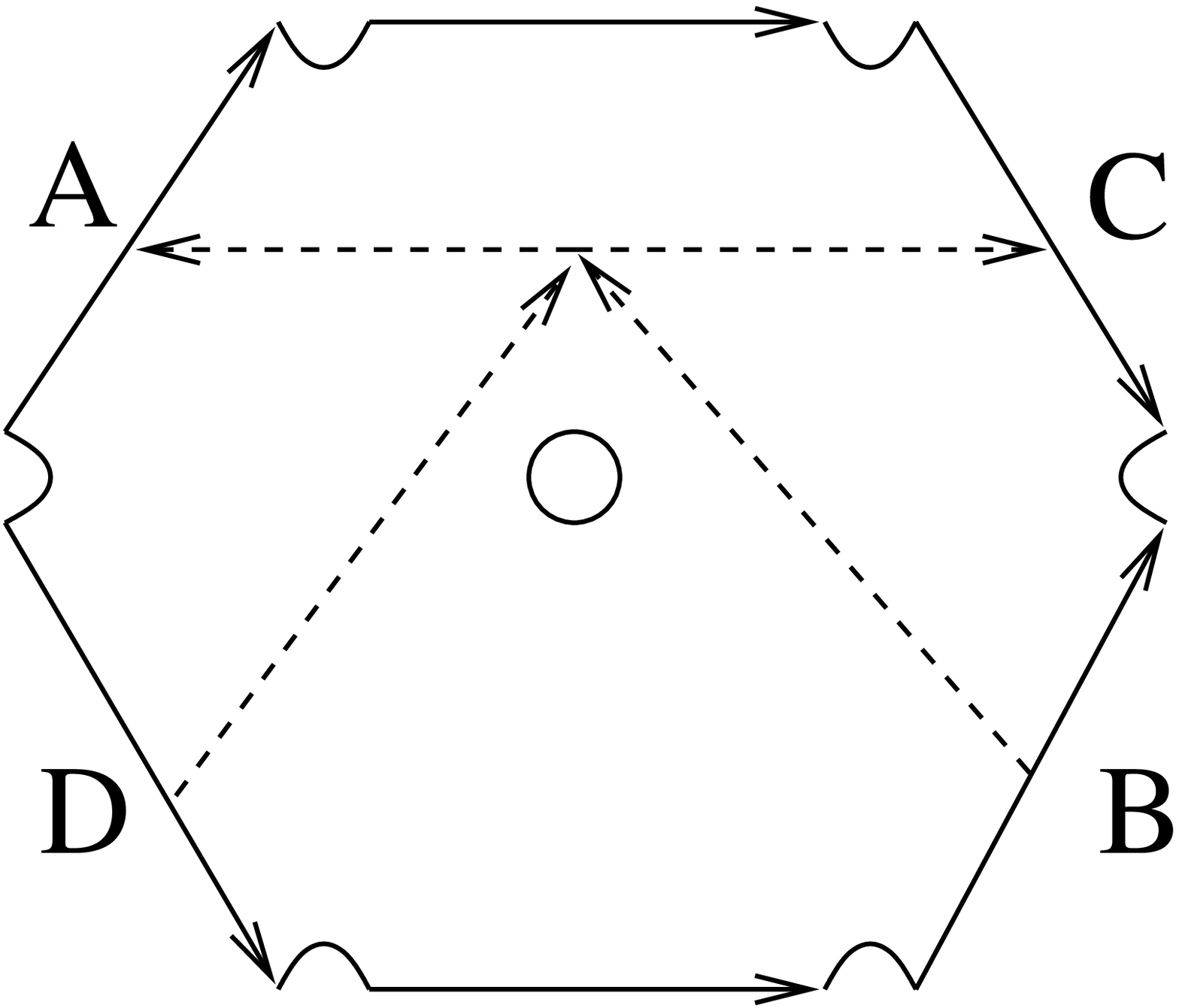}\\
Figure 18: the domain $\rm U(B_{i_1}B_{i_2}^2B_{i_1})$.
\end{center}
which is homeomorphic to torus to which we have removed a finite number of discs.\\
\qed

\end{document}